\newtheorem{thm}{Theorem}[section]
\newtheorem{lem}[thm]{Lemma}
\theoremstyle{definition}
\newtheorem{df}[thm]{Definition}
\newtheorem{que}[thm]{Question}
\theoremstyle{remark}
\newtheorem{cor}[thm]{Corollary}
\newcommand{\la}{\langle}
\newcommand{\ra}{\rangle}
\renewcommand{\iff}{\leftrightarrow}
\newcommand{\mc}{\mathcal}
\newcommand{\union}{\cup}
\newcommand{\inter}{\cap}
\newcommand{\Union}{\bigcup}
\renewcommand{\to}{\rightarrow}
\newcommand{\restrict}{\upharpoonright}
\newcommand{\nil}{\varnothing}
\renewcommand{\P}{\mathbb P}
\newcommand{\E}{\mathbb E}
\DeclareMathOperator{\DNR}{DNR}
\begin{document}
\title{A strong law of computationally weak subsets\footnote{Electronic version of an article published as
	\emph{J. Math. Log.} \textbf{11}, Issue 1 (2011), 1--10. DOI: 10.1142/S0219061311000980.
	\copyright World Scientific Publishing Company. Journal URL: \href{http://www.worldscientific.com/worldscinet/jml}{http://www.worldscientific.com/worldscinet/jml}.}}
\author{Bj\o rn Kjos-Hanssen\footnote{This material is based upon work supported by the National Science Foundation under Grants No. 0652669 and 0901020. The author is grateful for an excellent referee report on the paper.}\\ University of Hawai\textquoteleft i at M\=anoa }
	
\maketitle

\begin{abstract}
We show that in the setting of fair-coin measure on the power set of the natural numbers, each sufficiently random set has an infinite subset that computes no random set. That is, there is an almost sure event $\mc A$ such that if $X\in\mc A$ then $X$ has an infinite subset $Y$ such that no element of $\mc A$ is Turing computable from $Y$.
\end{abstract}

\section{Introduction} 
The practical utility of random bits being well established, we view randomness of an infinite set of positive integers $X$ as a valuable property. The question arises whether if a set $Y$ is close to $X$ in some sense, then $Y$ retains some of the value of $X$ in that, even if $Y$ is not itself random, one can compute a random sequence from $Y$. 

There are various ways in which $Y$ and $X$ could be considered ``close''; a natural one is to assume $Y\subseteq X$ and $Y$ is infinite.\footnote{This is not a natural notion of closeness for subsets of the plane, say, but rather in terms of the information provided: if $Y$ is a subset of $X$, i.e.\ whenever $n\in Y$, $n\in X$, then $Y$ provides some reliable information about $X$.} In this article we shall prove that under the fair-coin Lebesgue measure there is an almost sure event $\mc A$ such that if $X\in\mc A$ then $X$ has an infinite subset $Y$ such that no element of $\mc A$ is Turing reducible to $Y$. This confirms the intuition one may have that a subset of a random set should not generally be able to compute a random set. This ``strong law of computationally weak subsets'' is a probabilistic law in the same sense as the strong law of large numbers; it gives an almost sure property. A key to the proof will be the classical extinction criterion for Galton-Watson processes, Theorem \ref{extinctionCriterion}.

Our results improve upon an earlier result \cite{K:MRL} to the effect that there simply exists a Martin-L\"of random set $X$ and an infinite subset $Y$ of $X$ such that no Martin-L\"of random set is Turing reducible to $Y$. A different proof of that result may be deduced from work of Greenberg and Miller \cite{GM}. 

It is worthwhile to note that the computationally weak subset $Y$ cannot be chosen too sparse or too dense. If $Y$ is very sparse then the intervals between subsequent elements of $Y$ are longer than the running time (when finite) of a universal Turing machine, and so $Y$ solves the halting problem. If $Y$ is very dense then a majority function applied to disjoint intervals of bits of $Y$ will produce a random set. Another sense in which the construction of $Y$ must be nontrivial is that if $Y=X\cap R$, where $R$ is an infinite computable set, then $Y$ computes a random set. (Namely, the image of $Y$ under a computable bijection from $R$ to the natural numbers.) Instead, $Y$ will be obtained as the image of an infinite path through a certain noncomputable tree; $Y$ will exist by the extinction criterion for Galton-Watson processes. 

\section{Bushy trees}

Our overall plan is to apply the extinction criterion to a \emph{bushy} infinite tree, each path through which obeys a construction like that of Ambos-Spies, Kjos-Hanssen, Lempp, and Slaman \cite{AKLS}. Let $\omega=\{0,1,2,\ldots\}$ be the set of natural numbers and let $\omega^{<\omega}$ be the set of finite strings over $\omega$. If $\sigma,\tau\in\omega^{<\omega}$ then $\sigma$ is called a \emph{substring} of $\tau$, $\sigma\subseteq\tau$, if for all~$x$ in the domain of~$\sigma$, $\sigma(x)=\tau(x)$.

\begin{df}[\cite{AKLS}, \cite{KL}]
A finite set of incomparable strings in $\omega^{<\omega}$ is called a \emph{leaf bag}.\footnote{In \cite{AKLS} a leaf bag was, slightly confusingly, called a \emph{tree}.} Given $n\in\omega$, a nonempty leaf bag~$T$ is called \emph{$n$-bushy from $\sigma\in\omega^{<\omega}$} if 
\begin{itemize}
\item[(1)] every string $\tau\in T$ extends~$\sigma$, and
\item[(2)] for each $\tau\in\omega^{<\omega}$, if there exists $\rho\in T$ with $\sigma\subseteq\tau\subset\rho$, then there are at least $n$ many immediate successors of~$\tau$ which are substrings of elements of~$T$.
\end{itemize}
If $T$ is $n$-bushy from~$\sigma$ and $T\subseteq P\subseteq\omega^{<\omega}$, then $T$ is called \emph{$n$-bushy from~$\sigma$ for~$P$}.
\end{df}

\begin{df}
A set $C\subseteq\omega^{<\omega}$ is \emph{$n$-perfectly bushy} if the empty string is in $C$ and every element of $C$ has at least $n$ many immediate extensions in $C$.
\end{df}
An $n$-perfectly bushy set $C$ is in particular a tree in the sense of being closed under substrings, and the set of infinite paths through $C$ is a perfect set  $[C]\subset\omega^{\omega}$.

\begin{lem}[an extension of \cite{AKLS}*{Lemma 2.5}]\label{kum-basic} 
Let $n\ge 1$. Given a leaf bag~$T$ that is $(a+b-1)$-bushy from a string~$\alpha$ and given a set $P\subseteq T$, there is a subset~$S$ of~$T$ which is $a$-bushy for~$P$ or $b$-bushy for $T-P$.
\end{lem}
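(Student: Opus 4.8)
The plan is to prove this by induction on the height of the bag $T$, that is, on the length of the longest string in $T$ minus the length of $\alpha$. The statement is a ``splitting'' lemma: when the combined bushiness $a+b-1$ is available from $\alpha$, we can always carve out a sub-bag witnessing $a$-bushiness staying inside $P$ or $b$-bushiness staying inside the complement $T-P$. The key dichotomy at the root is whether $\alpha$ itself lies in $P$ or not, but since bushiness is measured by branching, the real case analysis will happen at the immediate successors of $\alpha$.

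First I would set up the base case, where $T$ consists of strings of length one more than $\alpha$ (or where $\alpha$ is a leaf), and check the claim directly from the definition. Then, for the inductive step, I would look at the immediate successors $\tau_1,\dots,\tau_k$ of $\alpha$ that are substrings of elements of $T$; since $T$ is $(a+b-1)$-bushy from $\alpha$, there are at least $a+b-1$ of them. For each such $\tau_i$, the portion of $T$ above $\tau_i$ is a leaf bag that is $(a+b-1)$-bushy from $\tau_i$, so by the induction hypothesis it contains a sub-bag $S_i$ that is either $a$-bushy for $P$ from $\tau_i$ or $b$-bushy for $T-P$ from $\tau_i$. I would then classify each successor $\tau_i$ as ``type-$a$'' or ``type-$b$'' according to which alternative its $S_i$ witnesses.

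The crux of the argument is a counting step. Because there are at least $a+b-1$ successors and each is one of the two types, by pigeonhole either at least $a$ of them are type-$a$ or at least $b$ of them are type-$b$. In the first case, I would assemble $S$ from $\alpha$ together with the $S_i$ for the $a$-many type-$a$ successors; this yields a sub-bag that is $a$-bushy for $P$ from $\alpha$, since $\alpha$ now has at least $a$ successors each carrying an $a$-bushy-for-$P$ continuation. The symmetric case gives $b$-bushiness for $T-P$. I would need to take care that the chosen $S_i$ are assembled into a genuine leaf bag of incomparable strings and that the ``for $P$'' (respectively ``for $T-P$'') condition is inherited correctly, noting that the definition's condition (2) need only be checked at nodes strictly below leaves.

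The main obstacle I anticipate is bookkeeping at the root versus the branches: one must verify that gluing $a$ branch-bags together actually produces $a$-bushiness \emph{from $\alpha$}, which requires that $\alpha$ have the requisite $a$ immediate successors inside the assembled set and that each branch independently satisfies the bushiness condition above its root. A subtle point worth flagging is the exact meaning of ``$a$-bushy for $P$'': whether it constrains only the leaves to lie in $P$ or the entire sub-bag, and whether $\alpha$ itself must lie in $P$. I would adopt the convention consistent with \cite{AKLS}*{Lemma 2.5} and make sure the pigeonhole bound $a+b-1$ is exactly what is needed so that the two alternatives are mutually exhaustive but the threshold is tight.
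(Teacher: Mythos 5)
Your proposal is correct and is essentially the paper's own argument in top-down form: the paper runs your induction as a single bottom-up labelling pass (each node gets label 1 iff at least $a$ of its labelled immediate successors do, so otherwise at least $(a+b-1)-(a-1)=b$ of them get label 0), and then takes $S$ to be the elements of $T$ sharing $\alpha$'s label, which is exactly your pigeonhole-and-glue step carried out all at once. The points you flag resolve as you expect: ``bushy for $P$'' only requires the bag itself (the incomparable strings forming $T$) to lie in $P$, not the interior nodes and not $\alpha$, and the threshold $a+b-1$ is precisely what makes the two alternatives exhaustive.
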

\begin{proof}
Give the elements of~$T$ the label 1 (0) if they are in~$P$ (not in~$P$, respectively). Inductively, suppose $\beta$ extends $\alpha$ and is a proper substring of an element of~$T$. Suppose all the immediate successors of $\beta$ that are substrings of elements of~$T$ have received a label. Give $\beta$ the label 1 if at least $a$ many of its labelled immediate successors are labelled 1; otherwise, give $\beta$ the label 0. (In this case, at least $(a+b-1) - (a-1) = b$ many immediate successors are labelled $0$.) This process ends after finitely many steps when $\alpha$ is given some label $i\in\{0,1\}$. Let $S$ be the set of $i$-labelled strings in~$T$. If $i=1$ then $S$ is contained in~$P$, and if $i=0$ then $S$ is contained in~$T-P$, so it only remains to show that $S$ is $a\mathbf 1_{\{1\}}(i) + b\mathbf 1_{\{0\}}(i) = ai+b(1-i)$-bushy.\footnote{Here $\mathbf 1_{A}(n)=1$ if $n\in A$, and $\mathbf 1_{A}(n)=0$ otherwise.}

Let $L$ be the set of all labelled strings. Note that $L$ is the set of strings extending $\alpha$ that are substrings of elements of $T$. For any $\beta\in L-T$, let $k$ be the number of immediate successors of $\beta$ that are in $L$. Since $T$ is $(a+b-1)$-bushy, $k\ge a+b-1$. Let $p\le k$ be the number of immediate successors of $\beta$ that have the same label as $\beta$. By construction, $p\ge ai+b(1-i)$. It follows that $S$ is $ai+b(1-i)$-bushy.
\end{proof}

\begin{lem}\label{notsharp-basic}
Let $a,n\ge 1$. Let $T$ be a leaf bag which is $2^{a-1}n$-bushy from a string
$\alpha$, and let $P_1,\ldots,P_a$ be sets of strings such that
$T\subseteq\Union_i P_i$. Then for some~$i$, $T$~has a subset which is
$n$-bushy from $\alpha$ for~$P_i$.
\end{lem}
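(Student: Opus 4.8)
The plan is to induct on $a$, using Lemma \ref{kum-basic} to peel off one of the sets $P_i$ at each step. The key observation is that the bushiness parameter $2^{a-1}n$ is engineered precisely so that halving it once per inductive step leaves $2^{a-2}n$ available, which for $a\ge 2$ is still at least $n$. For the base case $a=1$ we have $2^{a-1}n=n$ and $T\subseteq P_1$, so $T$ itself is $n$-bushy from $\alpha$ for $P_1$ and there is nothing to prove.

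For the inductive step I assume the statement for $a-1$ sets and suppose $T$ is $2^{a-1}n$-bushy from $\alpha$ and $T\subseteq\Union_{i=1}^a P_i$. I would set $P=P_a\intersect T$ and apply Lemma \ref{kum-basic} with both of its parameters equal to $2^{a-2}n$. This is legitimate because $2^{a-1}n=(2^{a-2}n)+(2^{a-2}n)\ge(2^{a-2}n+2^{a-2}n)-1$, so $T$ is in particular $(2^{a-2}n+2^{a-2}n-1)$-bushy from $\alpha$ (lowering the parameter only weakens the bushiness requirement). Lemma \ref{kum-basic} then yields a subset $S$ of $T$ that is $2^{a-2}n$-bushy from $\alpha$ and is contained either in $P$ or in $T-P$.

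In the first case $S\subseteq P_a$ and $S$ is $2^{a-2}n$-bushy from $\alpha$; since $2^{a-2}n\ge n$ for $a\ge 2$, $S$ is in particular $n$-bushy from $\alpha$ for $P_a$, and the conclusion holds with $i=a$. In the second case every element of $S$ lies in $T$ but outside $P_a$, hence in $\Union_{i=1}^{a-1}P_i$ because $T\subseteq\Union_{i=1}^a P_i$. Thus $S$ is a leaf bag that is $2^{a-2}n=2^{(a-1)-1}n$-bushy from $\alpha$ and is covered by the $a-1$ sets $P_1,\ldots,P_{a-1}$, so the inductive hypothesis applied to $S$ produces a subset of $S$ that is $n$-bushy from $\alpha$ for some $P_i$ with $i\le a-1$. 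Since a subset of $S$ is a subset of $T$, this finishes the step.

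The only thing to watch is the bookkeeping of the bushiness parameters: verifying that the even split $2^{a-2}n+2^{a-2}n$ meets the hypothesis of Lemma \ref{kum-basic}, and that the leftover $2^{a-2}n$ is exactly the parameter demanded by the inductive hypothesis at stage $a-1$. I expect no genuine obstacle beyond this arithmetic, since the geometric growth in the hypothesis is evidently designed to make the two-way split of Lemma \ref{kum-basic} iterate cleanly.
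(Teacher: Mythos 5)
Your proof is correct and follows essentially the same route as the paper: induction on $a$, with Lemma \ref{kum-basic} applied with the even split $2^{a-2}n + 2^{a-2}n$, peeling off one set per step (you peel off $P_a$ where the paper peels off $P_1$, a purely cosmetic difference). Your explicit verification that $2^{a-1}n \ge 2\cdot 2^{a-2}n - 1$ and that $2^{a-2}n \ge n$ for $a\ge 2$ spells out arithmetic the paper leaves implicit.
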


\begin{proof}
The case $a=1$ is trivial; the subset is $T$ itself. So assume $a\ge 2$ and
assume that Lemma~\ref{notsharp-basic} holds with $a-1$ in place of~$a$. By
Lemma~\ref{kum-basic}, if there is no $2^{a-2}n$-bushy subset of~$T$ from
$\alpha$ for~$P_1$ then there is a $2^{a-2}n$-bushy subset~$S$ of~$T$ from
$\alpha$ for the complement $\overline P_1$. As $T\inter\overline
P_1\subseteq P_2\union\cdots\union P_a$, it follows that $S$ is
$2^{a-2}n$-bushy from~$\alpha$ for $P_2\union\cdots\union P_a$. By Lemma
\ref{notsharp-basic} with $a-1$ in place of~$a$, $S$ has a subset~$R$ which
is $n$-bushy from~$\alpha$ for some~$P_i$, $i\ge 2$. As $R$ is also a subset
of~$T$, the proof is complete.
\end{proof}

We now need a simple but crucial strengthening of \cite{AKLS}*{Lemma 2.10}; the difference is that nonemptiness is replaced by bushiness. 

\begin{lem}\label{induct-basic}
Let $\Delta\in\omega$. Suppose we are given $\alpha$ and~$n$ and a set $P\subseteq \omega^{<\omega}$ such that there is no $n$-bushy leaf bag from~$\alpha$ for~$P$. If $V$ is an $n+\Delta-1$-bushy leaf bag from~$\alpha$ then there exists a $\Delta$-bushy set of strings $T$ such that for each $\beta\in T$,
	\begin{enumerate}
		\item $\beta\in V$, and 
		\item there is no $n$-bushy leaf bag from~$\beta$ for~$P$.
	\end{enumerate}
\end{lem}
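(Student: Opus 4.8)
The plan is to classify the strings extending $\alpha$ according to whether they witness a failure of the hypothesis, and then to prune the finite tree generated by $V$ down to a $\Delta$-bushy subtree all of whose leaves are ``good''. Call a string $\beta\supseteq\alpha$ \emph{bad} if there is an $n$-bushy leaf bag from $\beta$ for $P$, and \emph{good} otherwise; by hypothesis $\alpha$ is good. Write $\hat V$ for the set of all substrings of elements of $V$ that extend $\alpha$. This is a finite tree whose maximal nodes are exactly the elements of $V$, and, by $(n+\Delta-1)$-bushiness of $V$, each of its interior nodes has at least $n+\Delta-1$ immediate successors lying in $\hat V$. We may assume $\Delta\ge 1$, the case $\Delta=0$ being degenerate.

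The crucial step, and the one I expect to carry the weight of the argument, is an upward-propagation principle for badness: if a node $\beta$ has at least $n$ immediate successors that lie in $\hat V$ and are bad, then $\beta$ is itself bad. To see this I would take, for each such bad successor $\gamma$, a witnessing $n$-bushy leaf bag $T_\gamma$ from $\gamma$ for $P$, and form the union $\bigcup_\gamma T_\gamma$. Its elements are pairwise incomparable, since elements from distinct $T_\gamma$ extend distinct immediate successors of $\beta$ and elements within one $T_\gamma$ are already incomparable, and all of them lie in $P$. At $\beta$ it branches into the $\ge n$ chosen successors, and above each $\gamma$ it inherits the $n$-bushiness of $T_\gamma$, so it is an $n$-bushy leaf bag from $\beta$ for $P$, witnessing that $\beta$ is bad.

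Taking the contrapositive, a good interior node of $\hat V$ has at most $n-1$ bad immediate successors in $\hat V$, and hence at least $(n+\Delta-1)-(n-1)=\Delta$ good ones. I would then let $G$ be the subtree of $\hat V$ consisting of those nodes all of whose substrings extending $\alpha$ are good. It is rooted at the good node $\alpha$, each of its interior nodes has at least $\Delta$ immediate successors in $G$, and it is finite. Since a good interior node always has a good successor (here $\Delta\ge 1$ is used), a leaf of $G$ can only be a maximal node of $\hat V$, that is, an element of $V$, and it is good by construction. Setting $T$ equal to the set of leaves of $G$ therefore yields a leaf bag consisting of good elements of $V$, which is exactly conditions (1) and (2).

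It remains to verify that $T$ is $\Delta$-bushy from $\alpha$. Every node of the finite tree $G$ has a descendant leaf, so each immediate successor in $G$ of an interior node $\tau$ is a substring of some element of $T$; conversely any immediate successor of $\tau$ that is a substring of an element of $T$ lies on a path from $\alpha$ inside $G$ and so is a $G$-successor. Thus any $\tau$ with $\alpha\subseteq\tau\subset\rho$ for some $\rho\in T$ has at least $\Delta$ immediate successors that are substrings of elements of $T$, which is precisely the bushiness requirement. The only delicate points are the incomparability and bushiness bookkeeping in the union-of-leaf-bags step, and ensuring the pruning terminates at genuine leaves of $V$ rather than stalling at an interior node; both are controlled by the inequality $\Delta\ge 1$, which guarantees a surviving good successor at every good interior node.
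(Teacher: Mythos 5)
Your proof is correct, and it takes a genuinely different route from the paper's. The paper argues by contradiction and delegates the combinatorics to Lemma \ref{kum-basic}: if no such $T$ existed, then Lemma \ref{kum-basic} with $a=n$, $b=\Delta$, applied to the partition of $V$ into what you call bad and good strings, would yield an $n$-bushy set $B\subseteq V$ consisting of bad strings, and gluing the witnessing bags into $V^*=\bigcup_{\beta\in B}V_\beta$ would give an $n$-bushy leaf bag from $\alpha$ for $P$, contradicting the hypothesis. You instead give a direct, self-contained construction: you extend the bad/good dichotomy from the elements of $V$ to all nodes of the finite tree it generates, prove a local propagation principle (a node with at least $n$ bad immediate successors is itself bad --- this union-of-bags step is precisely the paper's gluing, performed at a single node), conclude by counting that every good interior node has at least $\Delta$ good successors, and harvest $T$ as the leaves of the hereditarily good subtree. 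The combinatorial core is the same in both arguments (unions of $n$-bushy bags over incomparable roots, plus the count $(n+\Delta-1)-(n-1)=\Delta$); what the paper's version buys is brevity through reuse of an already-proved lemma, while yours buys an explicit identification of $T$ and spells out the verifications (incomparability, nonemptiness, the cases $\tau=\beta$ versus $\tau\supsetneq\beta$ in the gluing) that the paper leaves implicit --- in effect you reprove the needed instance of Lemma \ref{kum-basic} inline. Finally, your restriction to $\Delta\ge 1$ is harmless and matches the paper's implicit convention: for $\Delta=0$ the lemma is only true if one allows the empty set to count as $0$-bushy (under the paper's official definition, which requires bushy leaf bags to be nonempty, there are easy counterexamples with $V$ a singleton), and the paper's own proof via Lemma \ref{kum-basic} with $b=0$ tacitly needs the same convention.
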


\begin{proof}
Fix $V$ and suppose there is no such set $T$. By Lemma \ref{kum-basic} there is an $n$-bushy set $B\subseteq V$ such that for all $\beta\in B$, there \emph{is} an $n$-bushy leaf bag $V_{\beta}$ from $\beta$ for $P$; then
\[
	V^*=\Union\limits_{\beta \supseteq \alpha,\,\beta\in B} V_\beta
\]
would be $n$-bushy from~$\alpha$ for~$P$.
\end{proof}

\section{Diagonalization}

Diagonally non-recursive functions will be our bridge between randomness and bushy trees. To a certain extent this section follows Ambos-Spies et al.\ \cite{AKLS}.

\begin{df}\label{firstdef}
The length of a string~$\sigma$ is denoted by~$|\sigma|$. A string $\la a_1,\ldots,a_n\ra\in\omega^n$ is denoted $(a_1,\ldots,a_n)$ when we find this more natural. 
The \emph{concatenation} of $\la a_1,\ldots,a_n\ra$ by $\la a_n\ra$ on the right is denoted $\la a_1,\ldots,a_n\ra*\la a_{n+1}\ra=\la a_1,\ldots,a_n\ra*a_{n+1}$. If $G\in\omega^\omega$ then $\sigma$ is a substring of~$G$ if for all~$x$ in the domain of~$\sigma$, $\sigma(x)=G(x)$.

Let $\Phi_n$, $n\in\omega$, be a standard list of the Turing functionals. So if $A$ is recursive in~$B$ then for some $n$, $A=\Phi_n^B$. For convenience, if $\Phi$ is a Turing functional and for all $B$ and $x$, the computation of $\Phi^B(x)$ is independent of~$x$, we sometimes write $\Phi^B$ instead of $\Phi^B(x)$. Let $\Phi_{n,t}$ be the modification of $\Phi_n$ which goes into an infinite loop after~$t$ computation steps if the computation has not ended after~$t$ steps. We abbreviate $\Phi_n^\emptyset$ by $\Phi_n$. If the computation $\Phi_e(x)$ terminates we write $\Phi_e(x)\downarrow$, otherwise $\Phi_e(x)\uparrow$.
\end{df}

\begin{df}
Given functions $H,G:\omega\to\omega$, we say $H$ is $\DNR$ (\emph{diagonally nonrecursive}) if for all $x\in\omega$, $H(x)\ne\Phi^G_x(x)$ or $\Phi^G_x(x)\uparrow$. Given
$h:\omega\to\omega$, we say $H$ is $h$-$\DNR$ if in addition for all~$n$, $H(n)<h(n)$. (This necessitates that $h(n)>0$ for all~$n$.) If $H$ is $\DNR$ and $\sigma$ is a substring of~$H$ then $\sigma$ is called a $\DNR$ \emph{string}.
\end{df}

\begin{df}
Let $F=\text{Fix}$ be a computable function such that for all $a\in\omega$, $\text{Fix}(a)$ is the fixed-point of $\Phi_a$ produced by the Recursion Theorem; thus, if $e=\text{Fix}(a)$ then 
\[
	\Phi_{e}(x)=\Phi_{\Phi_a(e)}(x)
\]
for all $x\in\omega$.
\end{df}

Throughout the rest of this article, fix a recursive function
$h:\omega\to\omega$ satisfying Theorem \ref{kurt}; for example, $h(n)=n^{2}$ works. 

\begin{df}\label{f-basic}
Given a string $\alpha\in\omega^{<\omega}$, $c\in\omega$, and $n\in\omega$, let $f=f_{\alpha,c,n}=\Phi_{\text{Search}(\alpha,c,n)}$ be defined by the condition: 
\begin{quote}
$\Phi_{\Phi_{\text{Search}(\alpha,c,n)}(e)}(x)=i$ if \\
there is a leaf bag~$T$ and a number $i<h(e)$ such that $T$ is $n$-bushy from~$\alpha$ for $\{\beta:\Phi_c^{\beta}(e)=i\}$ (and $i$ is the $i$ occurring for the first such leaf bag found). If such $T$ and $i$ do not exist then $\Phi_{f(e)}(x)\uparrow$.
\end{quote}

If we let $e=\text{Fix}(\text{Search}(\alpha,c,n))$ then consequently
\begin{quote}
$\Phi_e(x)=i$ if \\
there is a finite leaf bag~$T$ and a number $i<h(e)$ are found such that $T$ is $n$-bushy from~$\alpha$ for $\{\beta:\Phi_c^{\beta}(e)=i\}$ (and $i$ is the $i$ occurring for the first such leaf bag found). 
\end{quote}
\end{df}

\begin{df}\label{gvec-basic}
Let $\epsilon:\omega\to\omega$ be a finite partial function and write $e_t=\epsilon(t)$ for each~$t$ in the domain of~$\epsilon$.

Let $\Phi$ be any Turing functional such that for all $G:\omega\to\omega$,
\[
	\Phi^G(\epsilon)\downarrow\iff\exists t\in\text{dom}(\epsilon)\,[\Phi^G_t(e_t)\downarrow<h(e_t)].
\] 
Given $n\in\omega$ and~$\epsilon$, let $g(n,\epsilon)=2^a n$ where
\[
	a=\sum_{t\in \text{dom}(\epsilon)} h(e_t).
\]
\end{df}

\begin{lem}[\cite{AKLS}*{[Lemma 2.8}]\label{g-basic}
Let $n\ge 1$, let $\epsilon$ be a finite partial function from~$\omega$ to $\omega$, and let $g$ be the function defined in Definition~\ref{gvec-basic}. For each pair $(t,i)$ satisfying $i<h(e_t)$ and $t\in \mathop{\rm dom}(\epsilon)$, let $Q_{(t,i)}=\{\beta:\Phi^\beta_t(e_t)=i\}$. Let $Q=\{\beta:\Phi^\beta(\epsilon)\downarrow\}$. If there is a $g(n,\epsilon)$-bushy leaf bag for~$Q$ from some string~$\alpha$, then for some $(t,i)$, there is an $n$-bushy leaf bag from~$\alpha$ for $Q_{(t,i)}$.
\end{lem}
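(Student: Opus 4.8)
The plan is to recognize this as an immediate application of Lemma~\ref{notsharp-basic}, once $Q$ is decomposed into the sets $Q_{(t,i)}$. First I would unwind the definition of $Q$. By the defining property of the functional $\Phi$ in Definition~\ref{gvec-basic}, a string $\beta$ lies in $Q$ precisely when $\Phi^\beta(\epsilon)\downarrow$, which holds if and only if there is some $t\in\text{dom}(\epsilon)$ with $\Phi^\beta_t(e_t)\downarrow<h(e_t)$. Setting $i=\Phi^\beta_t(e_t)$ for such a $t$ gives $i<h(e_t)$ and $\beta\in Q_{(t,i)}$; conversely, each $Q_{(t,i)}$ with $i<h(e_t)$ is contained in $Q$. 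Hence
\[
	Q=\Union_{(t,i)} Q_{(t,i)},
\]
the union ranging over all pairs $(t,i)$ with $t\in\text{dom}(\epsilon)$ and $i<h(e_t)$.

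Next I would count the pairs indexing this union. For each fixed $t\in\text{dom}(\epsilon)$ the value $i$ ranges over $\{0,1,\ldots,h(e_t)-1\}$, contributing $h(e_t)$ pairs; summing over $t$ yields exactly $a=\sum_{t\in\text{dom}(\epsilon)} h(e_t)$ sets in the decomposition. The hypothesis provides a leaf bag $T$ that is $g(n,\epsilon)=2^a n$-bushy from $\alpha$ for $Q$, so in particular $T\subseteq Q$; since the existence of such a leaf bag forces $Q\ne\nil$ and $h>0$, we also have $a\ge 1$. Because $2^a n\ge 2^{a-1}n$, this $T$ is in particular $2^{a-1}n$-bushy from $\alpha$.

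Finally I would apply Lemma~\ref{notsharp-basic} to $T$ with the $a$ sets $Q_{(t,i)}$ in the role of $P_1,\ldots,P_a$ (note $T\subseteq\Union_{(t,i)} Q_{(t,i)}$). The lemma produces some pair $(t,i)$ for which $T$ has a subset that is $n$-bushy from $\alpha$ for $Q_{(t,i)}$; such a subset is itself a leaf bag, which is exactly the desired conclusion.

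The substance of the argument is purely the bookkeeping: one must correctly read the decomposition $Q=\Union_{(t,i)} Q_{(t,i)}$ off the biconditional defining $\Phi$, and verify that the number of sets is precisely the exponent $a$ appearing in $g(n,\epsilon)=2^a n$. With these two facts in place, Lemma~\ref{notsharp-basic} closes the argument at once; in fact the bound $2^a n$ supplies a little more bushiness than the $2^{a-1}n$ the lemma strictly requires, so there is room to spare.
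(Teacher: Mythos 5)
Your proposal is correct and follows exactly the paper's own argument: count the pairs $(t,i)$ to get $a=\sum_{t}h(e_t)$, note that the existence of a bushy (hence nonempty) leaf bag for $Q$ forces $a>0$, observe $2^a n\ge 2^{a-1}n$, and apply Lemma~\ref{notsharp-basic} to the sets $Q_{(t,i)}$. The only difference is that you spell out the decomposition $Q=\bigcup_{(t,i)}Q_{(t,i)}$ explicitly, which the paper leaves implicit.
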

\begin{proof}
The number of pairs $(t,i)$ such that $Q_{(t,i)}$ is defined is
$$
a=\sum_{t\in \text{dom}(\epsilon)} h(e_t).
$$
By the assumption that there is a $g(n,\epsilon)$-bushy leaf bag for~$Q$, it
follows that $a>0$. So since $2^an\ge 2^{a-1}n$, every $2^a n$-bushy leaf bag is
$2^{a-1}n$-bushy. Now apply Lemma~\ref{notsharp-basic} to the properties
$Q_{(t,i)}$.
\end{proof}

If $C\subseteq\omega^{<\omega}$ and $G\in\omega^{\omega}$ then we say $G\in [C]$ if for all $n$, $G\restrict n\in C$. Let $0'$ denote the halting problem for Turing machines.

\begin{thm}\label{theone-basic}
Let $\Delta\in\omega$. There is a $\Delta$-perfectly bushy set $C\subseteq \omega^{<\omega}$, $C\le_{T}0'$, such that for each $G\in [C]$ and all Turing functionals~$\Phi$, $\Phi^G$ is not $h$-$\DNR$. 
\end{thm}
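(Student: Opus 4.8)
The plan is to build $C$ by a construction recursive in $0'$, presented as an increasing union of finite frontiers, each frontier being a leaf bag, and to maintain two invariants throughout: that the tree built so far is $\Delta$-perfectly bushy, and a \emph{smallness invariant} asserting that from every current frontier node $\beta$ there is no $g(n,\epsilon)$-bushy leaf bag for a ``bad set'' $Q_\epsilon=\{\gamma:\Phi^\gamma(\epsilon)\downarrow\}$ of strings along which some already-targeted computation has converged below $h$. I would enumerate the requirements $R_s$: ``$\Phi_s^G$ is not $h$-$\DNR$,'' act for $R_s$ at \emph{every} node of the current frontier, and arrange the frontiers so that each $G\in[C]$ passes through exactly one node of each frontier; this guarantees that every path has every requirement acted on. The engine is a \textbf{big/small dichotomy} for the diagonalizing set, made effective through the fixed-point machinery of Definition~\ref{f-basic}.

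To act for $R_s$ at a frontier node $\beta$ carrying data $(\epsilon,n)$, I would form $e=\text{Fix}(\text{Search}(\beta,s,n))$ and ask the $0'$ question whether some $Q_{(s,i)}=\{\gamma:\Phi_s^\gamma(e)=i\}$ with $i<h(e)$ admits an $n$-bushy leaf bag from $\beta$. If it does (the \emph{big} case), then by Definition~\ref{f-basic} the search converges and sets $\Phi_e(e)=i$; installing the witnessing leaf bag $T$ (with $n\ge\Delta$, it is already $\Delta$-bushy) as the new frontier forces $\Phi_s^G(e)=i$ for every $G$ extending a leaf of $T$, and since the search defining $\Phi_e$ queries no oracle we also have $\Phi_e^G(e)=\Phi_e(e)=i$, so $\Phi_s^G$ is not $\DNR$ relative to $G$ and hence not $h$-$\DNR$. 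If no such leaf bag exists (the \emph{small} case), then taking the $h(e)$ sets $Q_{(s,i)}$ in Lemma~\ref{notsharp-basic} (equivalently, applying Lemma~\ref{g-basic}) shows that $P=\{\gamma:\Phi_s^\gamma(e)\downarrow<h(e)\}$ has no $2^{h(e)-1}n$-bushy leaf bag from $\beta$; I then absorb $e$ into $\epsilon$, observe via Lemma~\ref{g-basic} that the enlarged $Q_\epsilon$ is still small from $\beta$ because each of its pieces is, and apply Lemma~\ref{induct-basic} to pass to a $\Delta$-bushy frontier of successors each of which again has no bushy leaf bag for the enlarged $Q_\epsilon$. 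Since any $\gamma\in Q_\epsilon$ yields the singleton leaf bag $\{\gamma\}$, which is $n$-bushy from $\gamma$ for $Q_\epsilon$, the invariant keeps every frontier node out of $Q_\epsilon$; as a convergent sub-$h$ computation propagates to all extensions, no prefix of any path $G\in[C]$ lies in $Q_\epsilon$, whence $\Phi_s^G(e)\not<h(e)$ and $\Phi_s^G$ is again not $h$-$\DNR$.

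To finish I would verify that every $G\in[C]$ meets every $R_s$ — immediate from the frontier organization, since the big case settles $R_s$ permanently on the paths it governs and the small case settles it by the persisting avoidance of $Q_\epsilon$ — and that the whole construction is recursive in $0'$: the dichotomy is the $\Sigma^0_1$ question whether a finite $n$-bushy leaf bag for the c.e.\ set $Q_{(s,i)}$ exists, which $0'$ decides, while $\text{Fix}$ and $\text{Search}$ are recursive and the apparent circularity (searching for a leaf bag for a set defined using $e$ itself) is legitimized by the Recursion Theorem built into $\text{Fix}$.

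The step I expect to be the main obstacle is the bookkeeping that keeps the two invariants compatible across a big-case step. Lemma~\ref{induct-basic} is precisely tailored to propagate the smallness invariant in the small case, but after a diagonalization I must continue above the leaves of the installed bag $T$ while still avoiding the previously accumulated $Q_\epsilon$, and smallness from $\beta$ does not automatically descend to an arbitrary extension. The remedy I would use is a transitivity-of-bushiness observation — the set of nodes from which $Q_\epsilon$ is big is itself small from $\beta$, since a bushy leaf bag of such nodes could be concatenated with bushy leaf bags for $Q_\epsilon$ above each of them to contradict smallness from $\beta$ — so that, using Lemma~\ref{kum-basic}, the diagonalizing bag $T$ can be thinned to avoid those nodes and thereby leave $Q_\epsilon$ small above each of its leaves. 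The only remaining delicate point is controlling the growth of the bushiness parameter $g(n,\epsilon)=2^{\sum_t h(e_t)}n$ of Definition~\ref{gvec-basic} as $\epsilon$ accumulates one index per avoided requirement; because $\omega^{<\omega}$ supplies arbitrarily bushy leaf bags from every string, no genuine scarcity arises and the parameter need only be tracked, not bounded uniformly.
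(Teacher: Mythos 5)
Your overall plan is the paper's own proof: the same fixed-point machinery (Definition~\ref{f-basic}), the same big/small dichotomy decided by $0'$, the same smallness invariant propagated by a concatenation argument (your ``transitivity of bushiness'' is exactly Lemma~\ref{induct-basic}, whose proof is precisely the Lemma~\ref{kum-basic} thinning you describe), the same singleton-bag argument for the divergence case, and the same frontier organization (the paper phrases it as a single run whose choice of $G[s+1]$ can be made in a $\Delta$-bushy set of ways, which is your simultaneous-frontier picture). Indeed, your small case is handled at least as carefully as the paper's: by absorbing $e$ into $\epsilon$ \emph{before} applying Lemma~\ref{induct-basic} to the enlarged union $Q_\epsilon$, you propagate smallness of the new pieces to the successor frontier along with the old ones.

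There is, however, one concrete step that fails as you have parametrized it: the big case. You run the search with parameter $n$, while your invariant asserts smallness of $Q_\epsilon$ only at the larger parameter $g(n,\epsilon)=2^{\sum_t h(e_t)}n\ge n$. The witnessing bag $T$ is then only $n$-bushy, and the thinning cannot go through: applying Lemma~\ref{kum-basic} to $T$ with the goal of a $\Delta$-bushy subset avoiding the nodes from which $Q_\epsilon$ is big, you must refute the alternative outcome, an $(n-\Delta+1)$-bushy subset of bad nodes; but the concatenation argument only refutes a $g(n,\epsilon)$-bushy subset of bad nodes, and smallness at the \emph{larger} parameter $g(n,\epsilon)$ says nothing at the smaller parameter $n-\Delta+1$ (it is entirely possible that every leaf of $T$ is bad while $Q_\epsilon$ remains small at level $g(n,\epsilon)$). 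Note also that your closing remark --- that $\omega^{<\omega}$ supplies arbitrarily bushy bags, so parameters ``need only be tracked'' --- rescues only the small case, where the frontier-expanding bag is freely chosen; in the big case the bag $T$ is handed to you by the search with exactly the bushiness you demanded in advance, so the boost must happen \emph{before} the $0'$ question is asked. The fix is forced and is what the paper's bookkeeping accomplishes: the search is performed at parameter $n[s+1]=n[s+\frac12]+\Delta-1$ where $n[s+\frac12]=g(n[s],\epsilon[s])$ is the level at which the accumulated bad set is known to be small, so that a positive answer yields a bag bushy enough to thin down (via Lemma~\ref{induct-basic}) to a $\Delta$-bushy subfrontier still satisfying the invariant.
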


Towards proving Theorem \ref{theone-basic}, we use the following construction.

\begin{df} \emph{The Construction.}\label{const}
The construction depends on a parameter $\Delta\in\omega$.
At any stage $s+1$, the finite set $D_{s+1}$ will consist of indices $t\le s$ for computations $\Phi^G_t$ that we want to ensure to be divergent. The set $A_{s+1}$ will consist of what we think of as acceptable strings. The numbers $n[s]$ and $n[s+\frac{1}{2}]$ will measure the amount of bushiness required.

\noindent\emph{Stage 0.}

Let $G[0]=\emptyset$, the empty string, and $\epsilon[0]=\emptyset$. Let $n[0]=2$. Let $D_0=\emptyset$ and $A_0=\omega^{<\omega}$.

\noindent\emph{Stage $s+1$, $s\ge 0$.}

Let $n[s+\frac12]=g(n[s],\epsilon[s])$, with $g$ as in Definition~\ref{gvec-basic}. Let $n[s+1]=n[s+\frac12]+\Delta-1$. Below we will define $D_{s+1}$. Given $D_{s+1}$, $A_{s+1}$ will be 
\[
	A_{s+1}=\{\tau\supset G[s]\mid \neg(\exists t\in D_{s+1})(\exists i<h(e_{t})(\exists T)
\]
\[
	(\text{$T$ is a finite $n[s+1]$-bushy leaf bag from~$\tau$ for $Q_{(t,i)}$})\}
\]

Let $e$ be the fixed point of $f=f_{G[s],s,n[s+1]}$ (as defined in Definition \ref{f-basic}) produced by the Recursion Theorem, i.~e., $\Phi_e=\Phi_{f(e)}$.

\noindent\emph{Case 1.} $\Phi_e(e)\downarrow$.

Fix $T$ as in Definition~\ref{f-basic}. Let $D_{s+1}=D_s$. 
\[
	\tag{*}
	\text{Let $G[s+1]$
	 be an extension of~$G[s]$ with $G[s+1]\in T\cap A_{s+1}$.}
\]
\noindent\emph{Case 2.} $\Phi_e(e)\uparrow$. Let $D_{s+1}=D_s\union\{s\}$. Let $\epsilon[s+1]=\epsilon[s]\union\{(s,e)\}$. In other words, $e_s=\epsilon(s)$ exists and equals~$e$. 
\[
	\tag{+}
	\text{Let $G[s+1]$ be any element of~$A_{s+1}$.}
\]

\noindent Let $G=\Union_{s\in\omega} G[s]$.

\noindent\emph{End of Construction.}
\end{df}

\noindent
We now prove that the Construction satisfies Theorem~\ref{theone-basic} in a sequence of lemmas.

\begin{lem}\label{atleasttwo-basic}
For each $s,t\in\omega$ with $t\le s$, $n_t[s]\ge 2$.
\end{lem}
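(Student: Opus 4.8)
The plan is to establish the bound as a stage-by-stage invariant, proving by induction on $s$ the full statement ``$n_t[s]\ge 2$ for every $t\le s$'' all at once; treating the whole family simultaneously is essential, since each passage to a new stage both updates the existing components $t\le s$ and introduces a fresh one. The base case is immediate: at stage $0$ the only index is $t=0$, and the Construction (Definition~\ref{const}) initializes it to $n[0]=2$, so $n_0[0]\ge 2$.

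For the inductive step I assume $n_t[s]\ge 2$ for all $t\le s$ and track the two ways the family changes in passing to stage $s+1$. A newly introduced index is initialized to the current global bushiness, so it suffices to verify that this global value is $\ge 2$ and that no inherited component is driven below $2$. Both reduce to the shape of the stage recursion: at the half-stage $n[s+\frac12]=g(n[s],\epsilon[s])=2^{a}n[s]$ with $a=\sum_{t\in\mathrm{dom}(\epsilon[s])}h(e_t)\ge 0$, so the multiplicative factor $2^{a}\ge 1$ cannot decrease any component below its stage-$s$ value; then $n[s+1]=n[s+\frac12]+\Delta-1$, and since $\Delta\ge 1$ the additive correction $\Delta-1\ge 0$ cannot decrease it either. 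Chaining these with the induction hypothesis gives $n_t[s+1]\ge 2$ for every inherited $t\le s$, while the same computation gives $n[s+1]\ge 2$ for the fresh index $t=s+1$; together these cover all $t\le s+1$ and close the induction.

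The step I expect to be the main obstacle is the apportionment at the half-stage, where the bushiness $2^{a}n[s]$ must be distributed among the $a$ many pairs $Q_{(t,i)}$ (via Lemmas~\ref{notsharp-basic} and~\ref{g-basic}) while leaving \emph{each} surviving component with bushiness at least $2$, not merely nonempty (bushiness $\ge 1$). This is precisely what the choice $g(n,\epsilon)=2^{a}n$ buys: it keeps the full reserve factor $n=n[s]\ge 2$ intact after the split, rather than exhausting it down to the bare $2^{a-1}$ that Lemma~\ref{notsharp-basic} alone would consume. Checking that the per-component accounting never erodes this reserve below $2$ for any index $t$ is the one point where the argument is more than a routine monotonicity calculation.
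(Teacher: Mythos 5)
Your core argument is correct and is essentially the paper's proof: a one-step induction on $s$ with base $n[0]=2$, using $n[s+\frac12]=g(n[s],\epsilon[s])=2^a n[s]$ with $a\ge 0$ so that the multiplicative factor cannot decrease the value, and $n[s+1]=n[s+\frac12]+\Delta-1\ge n[s+\frac12]$. (In fact you are slightly more careful than the paper here: the paper's proof writes $n[s+1]=g(n[s],\epsilon[s])=2^a n[s]$, eliding the additive term $\Delta-1$, whereas you make explicit that $\Delta\ge 1$ is needed -- and it genuinely is, since for $\Delta=0$ one gets $n[1]=2^0\cdot 2 - 1 = 1$ at the first stage, where $\epsilon[0]=\emptyset$ forces $a=0$.)

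That said, a substantial part of your write-up is scaffolding for a structure that does not exist. The Construction (Definition~\ref{const}) defines a \emph{single} sequence $n[0], n[\frac12+0],\allowbreak n[1],\ldots$; there is no family of components $n_t[s]$ indexed by $t$, no ``newly introduced index initialized to the current global bushiness,'' and no inherited components to track -- the subscript $t$ in the lemma's statement is vestigial notation (apparently carried over from the multi-requirement setup of \cite{AKLS}), and the paper's own proof simply reads $n_t[s]$ as $n[s]$. Relatedly, your closing paragraph identifies as ``the main obstacle'' a per-component apportionment of $2^a n[s]$ among the pairs $Q_{(t,i)}$, and you never carry that check out. Fortunately no such check belongs to this lemma: the splitting among the $Q_{(t,i)}$ is exactly the business of Lemmas~\ref{notsharp-basic} and~\ref{g-basic}, invoked later in Lemma~\ref{accept-basic}, while the present lemma is purely a numerical monotonicity fact about the recursion. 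So your proof is complete as it stands, and the step you flagged as potentially missing is not missing -- it is irrelevant here.
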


\begin{proof}
For $s=0$, we have $n[0]=2$. For~$s+1$, we have $n[s+1]=g(n[s],\epsilon[s])=2^a n[s]$ for a certain $a\ge 0$, by Definition \ref{g-basic}, hence the lemma follows.
\end{proof}

\begin{lem}\label{accept-basic}
For each $s\ge 0$ the following holds.

\begin{enumerate}
\item[(1)] The Construction at stage~$s$ is well-defined and $G[s]\in A_s$. In particular, if $s>0$ then in Case 2, $A_s$ is nonempty, and in Case 1, $A_s$ contains at least one element of~$T$.

\item[(2)] There is no $n[s+\frac12]$-bushy leaf bag for $Q=\{\beta:\Phi^\beta(\epsilon[s])\downarrow\}$ from~$G[s]$.

\item[(3)] Every leaf bag~$V$ which is $n[s+1]$-bushy from~$G[s]$, and is not just the singleton of~$G[s]$, contains a $\Delta$-bushy set of elements of~$A_{s+1}$.
\end{enumerate}
\end{lem}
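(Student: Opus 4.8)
The plan is to prove the three clauses simultaneously by induction on~$s$, exploiting the following loop between consecutive stages: clause (3) at stage~$s$ feeds clause (1) at stage~$s+1$, which in turn feeds clause (2) at stage~$s+1$, while the proof of clause (3) at stage~$s$ consumes clause (2) at stage~$s$. So I would run the induction as: assuming (1)--(3) at stage~$s$, first derive (3) at stage~$s$ (the substantive step), then (1) and (2) at stage~$s+1$. For $s=0$ clause (1) is immediate since $G[0]=\emptyset\in\omega^{<\omega}=A_0$; clause (2) holds because $\epsilon[0]=\emptyset$ forces $Q=\{\beta:\Phi^\beta(\emptyset)\downarrow\}=\emptyset$ by Definition~\ref{gvec-basic}, and a bushy leaf bag is nonempty and cannot be contained in~$\emptyset$.

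For the step, clause (2) at stage~$s+1$ I would read off from clause (1) at stage~$s+1$: once $G[s+1]\in A_{s+1}$ is known, $G[s+1]$ admits no $n[s+1]$-bushy leaf bag for any $Q_{(t,i)}$ with $t\in D_{s+1}$, so Lemma~\ref{g-basic} applied to $\epsilon[s+1]$ (whose domain is $D_{s+1}$) yields no $g(n[s+1],\epsilon[s+1])=n[(s+1)+\frac12]$-bushy leaf bag for $Q^*=\{\beta:\Phi^\beta(\epsilon[s+1])\downarrow\}$ from~$G[s+1]$, which is exactly (2). Clause (1) at stage~$s+1$ I would read off from clause (3) at stage~$s$: a $\Delta$-bushy set is nonempty (using $\Delta\ge 1$ together with $n[s+1]\ge 2$ from Lemma~\ref{atleasttwo-basic}), so in Case~2 it exhibits an element of $A_{s+1}$, and in Case~1 it exhibits an element of $T\inter A_{s+1}$ once (3) is applied to $V=T$; the only point needing separate care is the degenerate possibility that $V$ or~$T$ is the singleton $\{G[s]\}$, which I would dispose of by hand.

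The heart of the matter is clause (3) at stage~$s$, where Lemma~\ref{induct-basic} is the engine. Membership $\tau\in A_{s+1}$ says precisely that $\tau\supset G[s]$ and that $\tau$ admits no $n[s+1]$-bushy leaf bag for any $Q_{(t,i)}$ with $t\in D_{s+1}$. I would apply Lemma~\ref{induct-basic} with $\alpha=G[s]$ and bushiness parameter $n[s+\frac12]$, which matches the hypothesis that $V$ is $n[s+1]$-bushy, where $n[s+1]=n[s+\frac12]+\Delta-1$, to extract a $\Delta$-bushy $T\subseteq V$ each of whose elements inherits the relevant ``no bushy leaf bag'' property from~$G[s]$; the elements of~$T$ properly extend~$G[s]$ because $V$ is non-singleton. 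The input ``no $n[s+\frac12]$-bushy leaf bag from~$G[s]$'' for the old indices $t\in D_s$ is supplied by clause (2) at stage~$s$: there is none for~$Q$, hence none for any $Q_{(t,i)}\subseteq Q$. In Case~2 there is the extra index $t=s$, and here the input is the Case~2 hypothesis itself, since $\Phi_e(e)\uparrow$ for the fixed point~$e$ of $f_{G[s],s,\cdot}$ guarantees, via Definition~\ref{f-basic}, that $G[s]$ admits no bushy leaf bag for $Q_{(s,i)}$ for any $i<h(e)$. Passing the conclusion of Lemma~\ref{induct-basic} back up through the monotonicity ``more bushy implies less bushy'' then places each extracted~$\beta$ in~$A_{s+1}$.

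The step I expect to be the main obstacle is the bushiness bookkeeping in Case~2: one must verify that a single extraction via Lemma~\ref{induct-basic}, whose budget is fixed by the gap $n[s+1]-n[s+\frac12]=\Delta-1$, simultaneously defeats the old constraints (controlled at level $n[s+\frac12]$ through~$Q$) and the new constraint arising from the index~$s$. This is exactly the place where the factor $2^{h(e_t)}$ built into~$g$ in Definition~\ref{gvec-basic} must be accounted for, via Lemma~\ref{notsharp-basic} and Lemma~\ref{kum-basic}, so that splitting the union $\Union_{i<h(e)}Q_{(s,i)}$ into its $h(e)$ pieces does not overspend the available bushiness. Aligning the level at which the Case~2 search fails to produce a leaf bag with the level at which $A_{s+1}$ imposes its constraint is the delicate point, and I would handle it by carrying the invariant at the coarser level $n[s+\frac12]$ throughout and invoking the finer level $n[s+1]$ only at the end through monotonicity.
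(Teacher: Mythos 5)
Your proposal follows the paper's proof almost step for step: the same cycle (1)$_s$ $\Rightarrow$ (2)$_s$ $\Rightarrow$ (3)$_s$ $\Rightarrow$ (1)$_{s+1}$, the same base case, the same appeal to Lemma~\ref{g-basic} to pass from membership in $A_{s+1}$ to clause (2), and the same single application of Lemma~\ref{induct-basic} with $P=Q$ and parameters $n[s+\frac12]$, $\Delta$ to prove clause (3). Your treatment of the old indices $t\in D_s$ (via $Q_{(t,i)}\subseteq Q$ and the monotonicity of bushiness) and of the implication (3)$_s\Rightarrow$(1)$_{s+1}$ is correct and matches the paper.

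The genuine gap is at the very point you flag as ``the main obstacle,'' and your sketch does not close it: the new index $t=s$ in Case~2. You say the Case~2 hypothesis supplies the needed input, since $\Phi_e(e)\uparrow$ means there is no $n[s+1]$-bushy leaf bag \emph{from $G[s]$} for $Q_{(s,i)}$, and that ``passing the conclusion of Lemma~\ref{induct-basic} back up through monotonicity then places each extracted $\beta$ in $A_{s+1}$.'' That inference fails for two reasons. First, Lemma~\ref{induct-basic} was applied with $P=Q$, and $Q_{(s,i)}\not\subseteq Q$ (the set $Q$ is built from $\epsilon[s]$, whose domain is $D_s\not\ni s$), so its conclusion says nothing about $Q_{(s,i)}$. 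Second, ``no bushy leaf bag from $G[s]$'' does not transfer to the extracted strings $\beta\supset G[s]$: a leaf bag that is $n[s+1]$-bushy from $\beta$ is \emph{not} $n[s+1]$-bushy from $G[s]$, since the nodes strictly between $G[s]$ and $\beta$ have only one successor leading into it (recall $n[s+1]\ge 2$ by Lemma~\ref{atleasttwo-basic}); so $\beta$ may admit such a leaf bag for $Q_{(s,i)}$ even though $G[s]$ admits none. Repairing this by a further extraction is blocked by exactly the bookkeeping you worry about: handling the $h(e)$ new sets $Q_{(s,i)}$ on top of $Q$ would require $V$ to have bushiness on the order of $2^{h(e)-1}n[s+1]$ (combining the colors by Lemma~\ref{notsharp-basic}) or $n[s+\frac12]+h(e)\,(n[s+1]-1)+\Delta-1$ (iterating Lemma~\ref{kum-basic} color by color), whereas $V$ is only $n[s+1]=n[s+\frac12]+\Delta-1$ bushy; and the factor $2^{h(e_s)}$ you invoke from Definition~\ref{gvec-basic} first appears in $g(n[s+1],\epsilon[s+1])=n[(s+1)+\frac12]$, i.e., it becomes available only at stage $s+2$, one stage after $A_{s+1}$ already constrains the new index. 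For what it is worth, this is also the one place where the paper's own proof is too quick --- its justification ``$Q_{(t,i)}\subseteq Q$'' is valid precisely for $t\in D_s$, hence settles Case~1 but not the new index of Case~2 --- so you have correctly isolated the delicate point; but closing it requires more than monotonicity, e.g.\ rebasing the stage-$(s+1)$ diagonalization at the already-extended node rather than at $G[s]$, not just more careful accounting inside the given argument.
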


\begin{proof}
It suffices to show that (1) holds for~$s=0$, and that for each $s\ge 0$, (1) implies (2) which implies (3), and moreover that (3) for~$s$ implies (1) for
$s+1$.

\noindent (1) holds for~$s=0$ because $G[0]=\emptyset\in\omega^{<\omega}=A_0$.

\noindent\emph{(1) implies (2):}

By definition of~$A_s$ and the fact that $G[s]\in A_s$ by (1) for~$s$, we have that for each $t\in D_s$, and each $i<h(e_t)$, there is no $n[s]$-bushy leaf bag from~$G[s]$ for $Q_{(t,i)}=\{\beta:\Phi^\beta_t(e_t)\downarrow=i\}$. Hence by Lemma~\ref{g-basic}, there is no $n[s+\frac12]$-bushy leaf bag for $Q=\{\beta:\Phi^\beta(\epsilon[s])\downarrow\}$ from~$G[s]$.

\noindent\emph{(2) implies (3):}

Since $V$ is $n[s+1]$-bushy, by Lemma~\ref{induct-basic} there is a $\Delta$-bushy set of elements $\beta$ of~$V$ from which there is no $n[s+\frac12]$-bushy leaf bag for~$Q$, and hence no $n[s+1]$-bushy leaf bag for $Q_{(t,i)}$ either, since $n[s+\frac12]\le n[s+1]$ and $Q_{(t,i)}\subseteq Q$. Moreover, each such $\beta$ properly extends~$G[s]$, since $V$ is an antichain and is not the singleton of~$G[s]$. Hence by definition of~$A_{s+1}$, each such element $\beta$ belongs to $A_{s+1}$.

\noindent\emph{(3) for~$s$ implies (1) for~$s+1$:}

If Case 1 holds, let $T$ be the leaf bag found by~$\Phi_e$, i.~e., $T$ is $n[s+1]$-bushy from~$G[s]$ (for~$Q_{(s,i)}$ for some~$i$). If $T$ is not just the singleton of~$G[s]$, and Case 1 holds, then apply (3) for~$s$ to~$T$.

If $T$ is just the singleton of~$G[s]$ or if Case 2 holds, then apply (3) for~$s$ to any $n[s+1]$-bushy non-singleton leaf bag from~$G[s]$. For example, this could be the set of immediate extensions~$G[s]*k$, $k<n[s+1]$.
\end{proof}

\begin{lem}
	\label{diverge-basic} 
	For any $s\ge 0$, if $s\in D_{s+1}$ then $\Phi_s^G(e_s)\uparrow$ or $\Phi_s^G(e_s)\ge h(e_s)$.
\end{lem}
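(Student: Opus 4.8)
The plan is to prove Lemma~\ref{diverge-basic} by tracing exactly how the index $s$ enters the set $D_{s+1}$ and what this entails about the stage-$(s+1)$ construction. By inspecting the Construction (Definition~\ref{const}), the only way $s$ enters $D_{s+1}$ is through Case~2 at stage $s+1$. Indeed, in Case~1 we set $D_{s+1}=D_s$, so no new index is added; thus if $s\in D_{s+1}$ then necessarily $s\notin D_s$ and Case~2 occurred. In Case~2 we have $e_s=e$ where $e$ is the fixed point of $f=f_{G[s],s,n[s+1]}$, and crucially $\Phi_e(e)\uparrow$. The first step is therefore to record these two facts: $e_s=e$ and $\Phi_{e_s}(e_s)\uparrow$.

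Next I would unwind what $\Phi_e(e)\uparrow$ means via Definition~\ref{f-basic}. Since $e$ is the fixed point, $\Phi_e(e)=i$ holds precisely when there exists a finite leaf bag $T$ that is $n[s+1]$-bushy from $G[s]$ for $\{\beta:\Phi_s^\beta(e)=i\}=Q_{(s,i)}$ with $i<h(e)=h(e_s)$. Hence $\Phi_e(e)\uparrow$ tells us that for \emph{every} $i<h(e_s)$, there is no $n[s+1]$-bushy leaf bag from $G[s]$ for $Q_{(s,i)}$. The second step is to promote this fact at $G[s]$ to the corresponding fact at $G$ itself. This is exactly what the definition of $A_{s+1}$ enforces: $A_{s+1}$ consists of those $\tau\supset G[s]$ from which, for every $t\in D_{s+1}$ and $i<h(e_t)$, there is no $n[s+1]$-bushy leaf bag for $Q_{(t,i)}$. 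Since $G[s+1]\in A_{s+1}$ by Lemma~\ref{accept-basic}(1) and since $G\supseteq G[s+1]\supset G[s]$, taking $t=s$ gives that there is no $n[s+1]$-bushy leaf bag from $G[s+1]$ for $Q_{(s,i)}$ for any $i<h(e_s)$, and the same holds when read off from the final $G$.

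The final step is to conclude the divergence claim. Suppose toward a contradiction that $\Phi_s^G(e_s)\downarrow=i$ with $i<h(e_s)$. Then $G\in Q_{(s,i)}$, and since the computation $\Phi_s^G(e_s)$ uses only a finite substring $\gamma\subset G$ as oracle, every oracle extending $\gamma$ also lands in $Q_{(s,i)}$. I would then exhibit an $n[s+1]$-bushy leaf bag from $G[s+1]$ (or from $\gamma$) witnessing membership in $Q_{(s,i)}$ — for instance, by taking a sufficiently deep full $n[s+1]$-ary tree of strings all extending $\gamma$, every leaf of which agrees with $G$ on the finite use and hence gives the answer $i$. This contradicts the fact established in the previous step. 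Therefore either $\Phi_s^G(e_s)\uparrow$ or $\Phi_s^G(e_s)\ge h(e_s)$, which is the desired conclusion.

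The main obstacle I anticipate is the bookkeeping at the junction between $G[s]$ and the eventual $G$: one must be careful that the nonexistence of a bushy leaf bag recorded at stage $s+1$ for the string $G[s]$ actually transfers to $G$, and the mechanism for this transfer is the membership $G[s+1]\in A_{s+1}$ together with the monotonicity $G[s+1]\subseteq G$. A secondary subtlety is producing, from a single convergent computation with finite use, a genuinely $n[s+1]$-bushy leaf bag for $Q_{(s,i)}$; this requires noting that once the finite oracle use $\gamma$ is fixed, one may freely pad with arbitrary values beyond the use to build a tree of the required branching, since all such extensions yield the same value $i$.
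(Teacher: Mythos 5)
Your first two steps are sound: $s\in D_{s+1}$ indeed forces Case~2 at stage $s+1$, so $e_s=e$ with $\Phi_e(e)\uparrow$, and $G[s+1]\in A_{s+1}$ together with $s\in D_{s+1}$ gives that there is no $n[s+1]$-bushy leaf bag \emph{from $G[s+1]$} for $Q_{(s,i)}$, for any $i<h(e_s)$. The gap is in your final step, and it sits exactly at the point you flag as a ``secondary subtlety.'' Suppose $\Phi_s^G(e_s)\downarrow=i<h(e_s)$ with use $\gamma\subset G$. Nothing in the construction prevents $\gamma\supsetneq G[s+1]$, and in that case your padding trick produces a leaf bag that is $n[s+1]$-bushy only \emph{from $\gamma$}: a leaf bag all of whose elements extend $\gamma$ cannot be bushy from $G[s+1]$, because each intermediate node $\tau$ with $G[s+1]\subseteq\tau\subsetneq\gamma$ has only one immediate successor (the one along $\gamma$) that is a substring of an element of the bag, while $n[s+1]\ge 2$. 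You cannot fill in the missing branches either, since strings incompatible with $\gamma$ need not lie in $Q_{(s,i)}$. And a bushy leaf bag rooted at $\gamma$ contradicts nothing you have established: nonexistence of bushy leaf bags from $G[s+1]$ does \emph{not} transfer to nonexistence from proper extensions of $G[s+1]$ --- preserving that property as one moves along $G$ is the entire purpose of the sets $A_t$, and it has to be re-earned at each stage. Your phrase ``and the same holds when read off from the final $G$'' is precisely the unproved claim.

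The paper closes this gap by moving to a later stage rather than staying at $s+1$. Since $G=\Union_t G[t]$ and the $G[t]$ are properly increasing, choose $t$ large enough that $\gamma\subseteq G[t]$, so $\Phi_s^{G[t]}(e_s)\downarrow=i<h(e_s)$ by the use principle. The sets $D_t$ are nondecreasing, so $s\in D_{s+1}\subseteq D_t$, and Lemma~\ref{accept-basic}(1) gives $G[t]\in A_t$. But the \emph{singleton} $\{G[t]\}$ is vacuously $n$-bushy from $G[t]$ for every $n$, in particular $n[t]$-bushy, and it is contained in $Q_{(s,i)}$ because $G[t]\supseteq\gamma$; this contradicts $G[t]\in A_t$. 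So the correct anchor is $A_t$-membership at a stage whose approximation covers the use, not $A_{s+1}$-membership, and once that repair is made your argument collapses into the paper's proof (with the singleton leaf bag replacing your padded tree, which also makes the bushiness verification trivial).
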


\begin{proof}
Otherwise for some $t\in\omega$, $\Phi_s^{G[t]}(e_s)\downarrow<h(e_s)$. Since the singleton leaf bag $T=\{G[t]\}$ is $n$-bushy from~$G[t]$ for all~$n$, hence in particular $n[t]$-bushy, this contradicts the fact that by Lemma \ref{accept-basic}(1), $G[t]\in A_t$.
\end{proof}

\begin{lem}\label{total-basic}
$G$ is a total function, i.e., $G\in\omega^\omega$.
\end{lem}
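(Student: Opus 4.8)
The plan is to show that $(G[s])_{s\in\omega}$ is a strictly $\subset$-increasing chain of strings, so that the lengths $|G[s]|$ are unbounded and the union $G=\Union_s G[s]$ is defined at every argument. This is exactly what totality, $G\in\omega^\omega$, amounts to.

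First I would invoke Lemma~\ref{accept-basic}(1), which asserts that for every $s$ the Construction at stage~$s$ is well-defined and that $G[s]\in A_s$. In particular this guarantees that the set from which $G[s]$ is selected is always nonempty (in Case~2 the set $A_s$ itself, and in Case~1 its intersection with the leaf bag~$T$), so that each $G[s]$ is a genuine, finite string and the Construction never stalls. It then suffices to observe that, for $s\ge 1$, membership $G[s]\in A_s$ already forces $G[s]$ to properly extend $G[s-1]$. Indeed, the definition of $A_s$ in the Construction restricts its members to strings $\tau\supset G[s-1]$, that is, to proper extensions of $G[s-1]$. Hence $|G[s]|>|G[s-1]|$ for every $s\ge 1$.

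Since $G[0]=\emptyset$ has length~$0$, an immediate induction gives $|G[s]|\ge s$, so the lengths are unbounded; and because the $G[s]$ form a nested chain, each extending its predecessor, they are pairwise compatible and their union $G=\Union_s G[s]$ is a single well-defined function. Its domain, being $\bigcup_s \mathop{\rm dom}(G[s])$ with $|\mathop{\rm dom}(G[s])|=|G[s]|\to\infty$, is all of~$\omega$. Therefore $G\in\omega^\omega$.

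I do not expect a genuine obstacle here: the whole substance of the matter is already packaged into Lemma~\ref{accept-basic}(1), which ensures that the Construction proceeds at every stage. Granting that, totality reduces to the purely syntactic remark that every candidate for $G[s]$ is drawn from $A_s$ and hence \emph{properly} extends $G[s-1]$, so each stage strictly lengthens the string. The only point requiring care is invoking the previous lemma correctly so as to rule out the string stabilizing (for instance, that Case~1 never forces a choice from an empty set); this is precisely the well-definedness clause of Lemma~\ref{accept-basic}(1).
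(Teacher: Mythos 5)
Your proof is correct and follows essentially the same route as the paper: both arguments reduce totality to the facts that $G[s]\in A_s$ at every stage (guaranteed by Lemma~\ref{accept-basic}) and that the definition of $A_{s+1}$ admits only proper extensions $\tau\supset G[s]$, so the stages form a strictly increasing chain whose union is total. The only cosmetic difference is that you cite part (1) of Lemma~\ref{accept-basic} while the paper cites part (3); since (3) for stage $s$ is what yields (1) for stage $s+1$, this is the same argument.
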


\begin{proof}
By Lemma~\ref{accept-basic}(3), $G[s+1]\in A_{s+1}$ for each $s\ge 0$, and hence by definition of~$A_{s+1}$, $G[s+1]$ is a proper extension of~$G[s]$. From this the lemma immediately follows.
\end{proof}

\begin{lem}
$G$ computes no $h$-$\DNR$ function.
\end{lem}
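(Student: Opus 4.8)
The plan is to argue by contradiction. Suppose that $H=\Phi_c^G$ is $h$-$\DNR$ for some Turing functional index~$c$. I would then inspect what the Construction does at stage $c+1$, because there the second parameter of $f_{G[c],c,n[c+1]}$ is exactly~$c$, so that the functional under scrutiny is $\Phi_c$. Let $e=\text{Fix}(f_{G[c],c,n[c+1]})$ be the fixed point used at that stage. The crucial preliminary observation is that the index~$e$ names a functional which merely searches for leaf bags and runs $\Phi_c$ against \emph{finite strings}~$\beta$, never consulting the global oracle; hence $\Phi_e$ is oracle-independent and $\Phi_e^G(e)=\Phi_e(e)$.

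First I would treat Case 1, where $\Phi_e(e)\downarrow=i$ for some $i<h(e)$. By Definition~\ref{f-basic} the witnessing leaf bag~$T$ is $n[c+1]$-bushy from~$G[c]$ for $\{\beta:\Phi_c^\beta(e)=i\}$, which by the definition of ``bushy for'' forces $T\subseteq\{\beta:\Phi_c^\beta(e)=i\}$. Since the Construction places $G[c+1]$ in $T\cap A_{c+1}$, a set nonempty by Lemma~\ref{accept-basic}(1), we get $\Phi_c^{G[c+1]}(e)=i$; because $G$ extends the finite string $G[c+1]$ and the halting computation queries only positions in $\text{dom}(G[c+1])$, monotonicity of the use principle yields $\Phi_c^G(e)=i$. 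Therefore $H(e)=\Phi_c^G(e)=i=\Phi_e(e)=\Phi_e^G(e)$, contradicting that $H$ is $\DNR$, whose $x=e$ instance demands $H(e)\ne\Phi_e^G(e)$ since $\Phi_e^G(e)$ converges.

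Next I would treat Case 2, where $\Phi_e(e)\uparrow$. Then $c\in D_{c+1}$ and $e_c=e$, so Lemma~\ref{diverge-basic} (with $s=c$) gives $\Phi_c^G(e_c)\uparrow$ or $\Phi_c^G(e_c)\ge h(e_c)$. But an $h$-$\DNR$ function is total with all values below~$h$, so $H(e)=\Phi_c^G(e)$ must converge with value $<h(e)$, a contradiction. Since the Construction falls into exactly one of the two cases at stage $c+1$, both are refuted, so no $\Phi_c^G$ is $h$-$\DNR$, which is the lemma.

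The main obstacle, and the place where the Recursion Theorem does the essential work, is aligning the diagonal value: $\Phi_e$ must output the very number~$i$ that the forced computation $\Phi_c^G(e)$ will take. This is precisely what the fixed point $e=\text{Fix}(f_{G[c],c,n[c+1]})$ arranges, since $\Phi_e$ first locates a leaf bag pinning $\Phi_c^\beta(e)$ to a common value $i<h(e)$ and then returns that same~$i$; the oracle-independence of $\Phi_e$ then lets us identify $\Phi_e^G(e)=i$ with the oracle-free value. The one supporting fact that must be invoked carefully is that $G[c+1]$ can genuinely be chosen inside~$T$, so that the forcing actually takes effect, and this is exactly what the bushiness bookkeeping of Lemma~\ref{accept-basic} guarantees.
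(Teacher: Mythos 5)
Your proof is correct and takes essentially the same route as the paper's: examine stage $c+1$, where the Recursion Theorem fixed point $e$ of $f_{G[c],c,n[c+1]}$ diagonalizes against $\Phi_c$; in Case 1 use $G[c+1]\in T\cap A_{c+1}$ (via Lemma~\ref{accept-basic}(1)) and the use principle to force $\Phi_c^G(e)=\Phi_e(e)$, and in Case 2 invoke Lemma~\ref{diverge-basic} to kill totality or the bound $h$. The only differences are cosmetic: you frame it as a contradiction rather than showing directly that each $\Phi_s^G$ fails to be $h$-$\DNR$, and you spell out the oracle-independence of $\Phi_e$ and the monotonicity step $\Phi_c^{G[c+1]}(e)=\Phi_c^G(e)$, which the paper leaves implicit.
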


\begin{proof}
If Case 1 of the construction is followed then $\Phi^G_s(e)=\Phi^{G[s+1]}(e)=\Phi_e(e)$ because $G[s+1]\in T$. So $\Phi_s^G$ is not $h$-$\DNR$. If Case 2 of the construction is followed then $s\in D_{s+1}$ and so $\Phi_s^G(e)\uparrow$ or $\Phi_s^G(e)\ge h(e)$ by Lemma \ref{diverge-basic}. Thus again $\Phi_s^G$ is not $h$-$\DNR$.
\end{proof}

\begin{proof}[Proof of Theorem \ref{theone-basic}] 
We showed how to construct a single $G\in\omega^{\omega}$, but since by Lemma \ref{accept-basic}(3) the choice of $G[s+1]$ can be made in a $\Delta$-bushy set of ways, the set $C$ of all functions $G$ obeying ($*$) and ($+$) in the construction \ref{const} is $\Delta$-perfectly bushy. Routine inspection show that the construction and hence the set $C$ are recursive in $0'$.
\end{proof}

\section{A law of weak subsets}

A sequence $X\in 2^{\omega}$ is also considered to be a set $X\subseteq\omega$.  For the notions of Martin-L\"of random and Schnorr random sets $X$ relative to an oracle $A$ we refer the reader to Nies' book \cite{NiesBook}. For $n\in\omega$, a set $X$ is $(n+1)$-random if it is Martin-L\"of random relative to the $n^{\text{th}}$ iteration of the Turing jump, $0^{(n)}$, and Schnorr $(n+1)$-random if it is Schnorr random relative to $0^{(n)}$. 

\begin{thm}[Ku\v{c}era~\cite{Kucera:84} and Kurtz  (see Jockusch \cite{Jockusch:89}*{Proposition 3}] \label{kurt}
There is a recursive function~$h$ such that for each Martin-L\"of random real
$R$, there is an $h$-$\DNR$ function~$f$ recursive in~$R$.
\end{thm}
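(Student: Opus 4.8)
The plan is to read the function $f$ off $R$ block by block, and to use a Martin-L\"of test to control the probability that a block accidentally codes the value we are required to avoid.

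First I would partition $\omega$ into consecutive finite intervals $I_0,I_1,\ldots$ with $|I_n|=\ell_n=\lceil 2\log_2(n+2)\rceil$, so that $(n+2)^2\le 2^{\ell_n}<2(n+2)^2$. For $X\in 2^\omega$ I interpret the block $X\restriction I_n$ as a binary number and reduce it modulo $(n+2)^2$ to obtain $v_n(X)\in\{0,\ldots,(n+2)^2-1\}$; each residue then has at most two preimages among the $2^{\ell_n}$ possible blocks, so for uniformly random $X$ and any fixed $k$ we have $\mu\{X:v_n(X)=k\}\le 2\cdot 2^{-\ell_n}\le 2/(n+2)^2$. I set $h(n)=(n+2)^2$ (a recursive function of the same order as $n^2$) and define the candidate $f=f_R$ by $f(n)=v_n(R)$. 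Then $f\le_T R$ and $f(n)<h(n)$ for all $n$ by construction, and the \emph{only} way $f$ can fail to be $\DNR$ at $n$ is that $\Phi_n(n)\downarrow=f(n)<h(n)$. Writing $B_n=\{X:\Phi_n(n)\downarrow=v_n(X)<h(n)\}$ for the set of reals witnessing this failure, each $B_n$ is $\Sigma^0_1$ with $\mu(B_n)\le 2/(n+2)^2$.

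Next I would convert the convergent tails of $\sum_n\mu(B_n)$ into a Martin-L\"of test. Choosing a recursive increasing sequence $j(0)<j(1)<\cdots$ with $\sum_{n\ge j(k)}2/(n+2)^2\le 2^{-k}$, the sets $V_k=\Union_{n\ge j(k)}B_n$ are uniformly $\Sigma^0_1$ with $\mu(V_k)\le 2^{-k}$, so $(V_k)_k$ is a Martin-L\"of test. Since $R$ is Martin-L\"of random, $R\notin V_{k_0}$ for some $k_0$, and unwinding the definition this says exactly that $f(n)\ne\Phi_n(n)$ for every $n\ge j(k_0)$. Thus from $R$ we have produced a bounded function that is $\DNR$ on the cofinite set $[j(k_0),\infty)$.

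I expect the main obstacle to be the last step: passing from ``$\DNR$ on a tail'' to ``$\DNR$ everywhere.'' The threshold $j(k_0)$ is the randomness deficiency of $R$ and is not computable from $R$, so we cannot simply correct the finitely many initial values of $f$ by hand---that would require knowing $\Phi_n(n)$ for $n<j(k_0)$, i.e.\ an unlocatable finite amount of the halting problem. This is precisely where I would invoke the Recursion Theorem, in the self-referential style already used in Definition~\ref{f-basic} via $\mathrm{Fix}$: instead of diagonalizing against the plain list $\Phi_n$ one diagonalizes against a fixed-point index, so that the low-index requirements are met automatically and the resulting function is total and $\DNR$ at every argument, at the cost of enlarging $h$ by a constant factor (leaving it recursive and of order $n^2$). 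The block-coding and the test estimate are routine and manifestly preserve both $R$-computability and the recursive bound; the recursion-theoretic upgrade is the only delicate point, and it is exactly the standard device by which the Ku\v{c}era--Kurtz theorem is established.
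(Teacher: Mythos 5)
Theorem \ref{kurt} is not proved in the paper at all---it is imported from Ku\v{c}era and Kurtz/Jockusch---so your proposal has to be measured against the classical argument. Your block-coding construction \emph{is} that classical argument: cut $R$ into blocks $I_n$, read off a value $v_n(R)<h(n)$, note that the failure sets $B_n$ are uniformly $\Sigma^0_1$ with $\mu(B_n)\le 2/(n+2)^2$, and assemble the tails into a Martin-L\"of test so that randomness of $R$ gives $f(n)\ne\Phi_n(n)$ for all $n\ge j(k_0)$. All of that is correct.

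The flaw is in the last step, the one you yourself single out as the crux. The Recursion Theorem is not the device that finishes this proof, and as described it does not work. Under the natural reading (pad each index $n$ up to an index $q(n)>j(k_0)$ computing the same value, and evaluate $f$ there), the new function is bounded by $h(q(n))$ rather than $h(n)$, and since $j(k_0)$ is the randomness deficiency of $R$, the enlargement of the bound is $R$-dependent---not ``a constant factor.'' That ruins the statement, which quantifies $h$ \emph{before} $R$: one fixed recursive bound must work for every Martin-L\"of random real. More fundamentally, the obstruction you identify (not knowing $\Phi_n(n)$ for $n<j(k_0)$) is informational, and self-reference cannot manufacture that information. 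What actually finishes the proof is much simpler, and is the reason this is a non-issue: the theorem is non-uniform, asserting only that \emph{some} $h$-$\DNR$ function recursive in $R$ \emph{exists}. Since $h(n)=(n+2)^2\ge 2$, for each of the finitely many $n<j(k_0)$ there exists a value $a_n<h(n)$ with $a_n\ne\Phi_n(n)$; let $f'$ take these (non-effectively chosen) values there and agree with $f$ elsewhere. A finite modification of $f$ is still recursive in $R$---hard-code the finitely many changes into the reduction---and $f'$ is $h$-$\DNR$ everywhere. Nothing in the definition of Turing reducibility requires the patch to be found effectively from $R$. With that one-line repair in place of your recursion-theoretic appeal, your construction is a complete and standard proof of the theorem.
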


Applying Theorem \ref{theone-basic}, we have
\begin{thm}\label{Main}
Let $\Delta\in\omega$. There is a $\Delta$-perfectly bushy set $C\subseteq \omega^{<\omega}$, $C\le_{T}0'$, such that for each $G\in [C]$ and each Martin-L\"of random set $X$, $X\not\le_{T} G$. 
\end{thm}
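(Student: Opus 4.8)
The plan is to derive Theorem \ref{Main} from Theorem \ref{theone-basic} by using the Ku\v{c}era--Kurtz theorem (Theorem \ref{kurt}) as the bridge between Martin-L\"of randomness and $h$-$\DNR$ functions. First I would fix the recursive function $h$ provided by Theorem \ref{kurt}, which is the same $h$ fixed throughout the paper. Applying Theorem \ref{theone-basic} with this $h$, I obtain a $\Delta$-perfectly bushy set $C\subseteq\omega^{<\omega}$ with $C\le_T 0'$ such that for every $G\in[C]$ and every Turing functional $\Phi$, the function $\Phi^G$ fails to be $h$-$\DNR$. The claim is that this very same $C$ witnesses Theorem \ref{Main}.

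The core of the argument is a short contrapositive. Suppose toward a contradiction that some $G\in[C]$ computes a Martin-L\"of random set $X$, i.e.\ $X\le_T G$. By Theorem \ref{kurt}, applied to the random real $X$, there is an $h$-$\DNR$ function $f$ with $f\le_T X$. Composing the two reductions, $f\le_T X\le_T G$, so $f$ is recursive in $G$; hence there is a Turing functional $\Phi$ with $f=\Phi^G$. But then $\Phi^G$ is an $h$-$\DNR$ function, directly contradicting the conclusion of Theorem \ref{theone-basic} that no $\Phi^G$ is $h$-$\DNR$ for $G\in[C]$. This contradiction shows that no $G\in[C]$ computes any Martin-L\"of random set, which is exactly the statement $X\not\le_T G$ for every Martin-L\"of random $X$.

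The only points requiring care are bookkeeping ones. I must ensure that the $h$ used in invoking Theorem \ref{theone-basic} is literally the one guaranteed by Theorem \ref{kurt}, so that the notion of $h$-$\DNR$ matches on both sides of the argument; the paper has already arranged this by fixing $h$ once and for all (e.g.\ $h(n)=n^2$) to satisfy Theorem \ref{kurt}. I should also note that Turing reducibility is transitive, so that $f\le_T X$ and $X\le_T G$ yield $f\le_T G$, and that ``recursive in $G$'' is precisely ``$=\Phi^G$ for some functional $\Phi$'' in the indexing fixed in Definition \ref{firstdef}.

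I do not expect a genuine obstacle here: the substance of the theorem lives entirely in the bushy-tree construction behind Theorem \ref{theone-basic}, and Theorem \ref{Main} is essentially a one-line deduction via the Ku\v{c}era--Kurtz characterization. If anything counts as the ``main step,'' it is simply recognizing that $h$-$\DNR$ is the correct intermediary class: randomness is strong enough to compute an $h$-$\DNR$ function (by Theorem \ref{kurt}), while the paths through $C$ are engineered to be too weak to compute any such function (by Theorem \ref{theone-basic}), and the two facts are incompatible under a single $G$.
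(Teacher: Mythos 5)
Your proof is correct and is exactly the paper's intended argument: the paper derives Theorem \ref{Main} from Theorem \ref{theone-basic} together with the Ku\v{c}era--Kurtz theorem (Theorem \ref{kurt}), using the fixed recursive $h$, just as you do. Your write-up merely makes explicit the contrapositive/transitivity details that the paper leaves implicit.
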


The key idea is now to consider the intersection of $C$ with a random set $X\subseteq\omega^{<\omega}$ as a Galton-Watson process. Theorem \ref{extinctionCriterion} can be considered the fundamental result in the theory of such processes. It was first stated by Bienaym\'e in 1845; see Heyde and Seneta \cite{HS}*{pp. 116--120} and Lyons and Peres \cite{LP}*{Proposition 5.4}. The first published proof appears in Cournot \cite{Cournot}*{pp. 83--86}. As usual, $\P$ denotes probability.

\begin{thm}[Extinction Criterion]\label{extinctionCriterion}
Given numbers $p_{k}\in [0,1]$ with $p_{1}\ne 1$ and $\sum_{k\ge 0}p_{k}=1$, let $Z_{0}=1$, let $L$ be a random variable with $\P(L=k)=p_{k}$, let $\{L^{(n)}_{i}\}_{n,i\ge 1}$ be independent copies of $L$, and let 
\[
Z_{n+1}=\sum_{i=1}^{Z_{n}}L_{i}^{(n+1)}.
\]
Let $q=\P((\exists n)\, Z_{n}=0)$. Then $q=1$ iff $\E(L)=\sum_{k\ge 0}kp_{k} \le 1$. Moreover, $q$ is the smallest fixed point of $f(s)=\sum_{k\ge 0}p_{k}s^{k}$.
\end{thm}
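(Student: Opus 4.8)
The plan is to run the classical generating-function argument, since the statement concerns only the Galton--Watson process and does not interact with the computability machinery developed earlier. Write $f_n(s)=\E(s^{Z_n})$ for the probability generating function of the $n$-th generation, so that $f_0(s)=s$ and $f_1=f$. The \emph{branching property} — conditioning on $Z_n$ and using that the $Z_n$ individuals reproduce independently with common generating function $f$ — yields the composition identity $f_{n+1}=f\circ f_n$, so $f_n$ is the $n$-fold iterate of $f$. In particular $f$ is nondecreasing and continuous on $[0,1]$, with $f(1)=\sum_k p_k=1$.

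First I would identify the extinction probability as a limit of iterates. Since extinction by time $n$ forces extinction by time $n+1$, the events $\{Z_n=0\}$ increase, so $q=\lim_n\P(Z_n=0)=\lim_n f_n(0)$. Writing $q_n=f_n(0)$, the composition identity gives the recursion $q_{n+1}=f(q_n)$ with $q_0=0$. Continuity of $f$ then forces $q=f(q)$, so $q$ is a fixed point of $f$ in $[0,1]$. To see that it is the \emph{smallest} such fixed point, let $r\in[0,1]$ be any fixed point; since $f$ is nondecreasing and $q_0=0\le r$, an easy induction using $q_{n+1}=f(q_n)\le f(r)=r$ gives $q_n\le r$ for all $n$, whence $q\le r$ in the limit. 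This already establishes the last sentence of the theorem.

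The remaining task is the dichotomy $q=1\iff\E(L)\le 1$, which I would settle by a convexity analysis of $f$ on $[0,1]$. Since $f''(s)=\sum_{k\ge 2}k(k-1)p_k s^{k-2}\ge 0$, $f$ is convex, and $f'(1^{-})=\sum_k k p_k=\E(L)=:m$ (allowing $m=+\infty$). The hypothesis $p_1\ne 1$ rules out the degenerate case $f(s)=s$ and guarantees that $f$ meets the diagonal in at most two points of $[0,1]$, one of which is always $s=1$. If $m\le 1$, then by convexity $f'(s)\le m\le 1$ for $s<1$, so $f(s)-s\ge 0$ throughout $[0,1]$; combined with strict convexity (or with the affine subcase, handled separately using $p_1\ne 1$) this makes $s=1$ the unique fixed point, forcing $q=1$. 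If $m>1$, then $f'(1^{-})>1$ pushes $f(s)<s$ just below $1$, while $f(0)=p_0\ge 0$; continuity then produces a fixed point in $[0,1)$, so the smallest fixed point satisfies $q<1$.

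I expect the main obstacle to be the careful bookkeeping in the $m\le 1$ versus $m>1$ split: one must treat the affine case ($p_k=0$ for $k\ge 2$), where strict convexity fails, the boundary case $m=1$, and the possibilities $p_0=0$ and $m=+\infty$, in each instance verifying that the hypothesis $p_1\ne 1$ is precisely what prevents a continuum of fixed points and pins down whether the smallest fixed point equals $1$ or lies strictly below it. The branching property underlying $f_{n+1}=f\circ f_n$ is standard but should be stated with care, since the entire argument rests on it.
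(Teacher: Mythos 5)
Your proposal is correct, but there is nothing in the paper to compare it against: the paper states Theorem \ref{extinctionCriterion} as a classical result and delegates its proof entirely to the literature (Cournot, Heyde--Seneta, Lyons--Peres). What you have written is the standard generating-function proof, essentially the one in those references, and it is sound: the identification $q=\lim_n f_n(0)$, the fixed-point property via continuity, the minimality via induction against any fixed point $r$, and the convexity dichotomy (strict convexity when some $p_k>0$ with $k\ge 2$, the affine subcase via $p_1\ne 1$, and the case $m>1$ including $m=+\infty$) are all handled correctly. Two small points deserve attention. First, your justification of $f_{n+1}=f\circ f_n$ by ``conditioning on $Z_n$'' actually yields $f_{n+1}=f_n\circ f$, since $\E\bigl(s^{Z_{n+1}}\mid Z_n\bigr)=f(s)^{Z_n}$ gives $f_{n+1}(s)=f_n(f(s))$; the identity $f_{n+1}=f\circ f_n$ comes instead from conditioning on the \emph{first} generation and applying the branching property to the $Z_1$ independent subtrees. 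Both identities hold once one knows $f_n$ is the $n$-fold iterate of $f$ (an easy induction from either), so this is a misattribution rather than a gap, but it should be stated correctly because your recursion $q_{n+1}=f(q_n)$ rests on it. Second, the clause ``$q$ is the smallest fixed point of $f$'' must be read as the smallest fixed point in $[0,1]$ (equivalently, the smallest nonnegative one): for the cubic $f(s)=\frac18+\frac38 s+\frac38 s^2+\frac18 s^3$ used later in the paper, $f$ has the negative real fixed point $-2-\sqrt{5}$, so the literal reading is false. Your argument proves exactly the correct reading, and indeed the paper itself switches to ``smallest positive fixed point'' when it applies the theorem.
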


We are interested in the case where each person has $n$ children, each with probability $p$ of surviving; then the probability $p_{k}$ of $k$ children surviving satisfies
\[
	p_{k}={n\choose k}p^{k}(1-p)^{n-k}
\]
and $\E(L)=np$. In particular, if $n=\Delta=3$ and $p=1/2$ then $q<1$, i.e., there is a positive probability of non-extinction of the family.

A synonym for \emph{Martin-L\"of random} is \emph{1-random}. A more restrictive notion of randomness is \emph{Schnorr 2-randomness} (see Nies' monograph \cite{NiesBook}).
 
\begin{thm}[Law of Computationally Weak Subsets]\label{law}
For each Schnorr 2-random set $R$ there is an infinite set $S\subseteq R$ such that for all $Z\le_{T}S$, $Z$ is not 1-random.
\end{thm}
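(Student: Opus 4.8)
The plan is to realize the intersection $C\cap X$ of a bushy tree $C$ with a random set of strings $X$ as a supercritical Galton--Watson process and to read an infinite subset of $R$ off a surviving branch. First I would invoke Theorem~\ref{Main} with $\Delta=3$ to obtain a $3$-perfectly bushy $C\le_T 0'$ all of whose paths compute no $1$-random set; by selecting, uniformly in $0'$, exactly three immediate successors at each node I may assume $C$ is finitely branching, namely exactly $3$-ary, without losing the property that every $G\in[C]$ computes no $1$-random set. Fix a computable bijection $b\colon\omega^{<\omega}\to\omega$ and, given $R$, set $X=\{\sigma:b(\sigma)\in R\}\subseteq\omega^{<\omega}$, so that $X\equiv_T R$ and, when $R$ is fair-coin random, each string lies in $X$ with probability $\tfrac12$, independently across strings. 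Pruning $C$ to the nodes lying in $X$ then yields exactly the Galton--Watson process with offspring distribution $\mathrm{Binomial}(3,\tfrac12)$ and mean $\E(L)=3/2>1$; by Theorem~\ref{extinctionCriterion} its extinction probability $q$ is the least fixed point of $f(s)=(\tfrac12+\tfrac12 s)^3$ and satisfies $q<1$.

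The main obstacle is that $q$, while $<1$, is \emph{positive}: a single tree survives with probability only $1-q$, so ``$C\cap X$ is infinite'' is not an almost sure event and hence cannot be guaranteed for an arbitrary sufficiently random $R$. I would overcome this by running infinitely many \emph{independent} copies of the process. Choose a $0'$-computable infinite antichain $\sigma_0,\sigma_1,\ldots$ in $C$ with pairwise disjoint subtrees; for instance, let $\sigma_i$ branch off the leftmost path of $C$ at level $i$. Since the strings extending distinct $\sigma_i$ are disjoint, the events $E_i=\{\sigma_i\in X\text{ and the subtree of }C\cap X\text{ below }\sigma_i\text{ is infinite}\}$ depend on disjoint blocks of coin flips and are therefore mutually independent, each of probability $c:=\tfrac12(1-q)>0$. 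Hence $\P\bigl(\bigcup_i E_i\bigr)=1$, so the favourable event $F=\bigcup_i E_i$ is almost sure.

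It remains to see that \emph{every} Schnorr $2$-random $R$ lands in $F$, for which I would bound the complexity of $F^{c}$. Because $C$ is finitely branching, K\"onig's lemma shows that $E_i$ is $\Pi^0_1$ relative to $0'$ in $R$: at each level $n$ there are only finitely many candidate nodes, and whether one of them survives is decided by $0'$ together with finitely many bits of $R$. Thus each $E_i^{c}$ is $\Sigma^0_1(0')$, and the nested sets $G_N=\bigcap_{i\le N}E_i^{c}$ form a uniformly $\Sigma^0_1(0')$ sequence with the \emph{computable} measures $\mu(G_N)=(1-c)^{N+1}$. Passing to a subsequence $N_k$ with $(1-c)^{N_k+1}\le 2^{-k}$ produces a Schnorr test relative to $0'$ whose intersection contains $F^{c}=\bigcap_N G_N$. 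Since $R$ is Schnorr random relative to $0'$ it passes this test, so $R\in F$. This is precisely the step where Schnorr $2$-randomness, rather than full $2$-randomness, suffices, because the test has computable measures.

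Finally, fix $i$ with $R\in E_i$ and let $G\in[C]$ be an infinite path through $\sigma_i$ all of whose initial segments of length at least $|\sigma_i|$ lie in $X$; such a path exists because the subtree below $\sigma_i$ is infinite. Put $S=\{\,b(G\restrict k):k\ge|\sigma_i|\,\}$. Every such $G\restrict k$ lies in $X$, so $S\subseteq R$; $S$ is infinite because $b$ is injective; and $S\le_T G$, since, using the fixed length $|\sigma_i|$ and the computable $b$, one decides $m\in S$ by testing whether $b^{-1}(m)$ is an initial segment of $G$ of length at least $|\sigma_i|$. Consequently any $Z\le_T S$ satisfies $Z\le_T G$, and since $G\in[C]$ Theorem~\ref{Main} guarantees that $Z$ is not $1$-random. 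This produces the desired infinite $S\subseteq R$ and completes the proof.
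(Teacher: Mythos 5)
Your proof is correct and follows essentially the same route as the paper: Theorem \ref{Main} with $\Delta=3$, the Galton--Watson extinction criterion, infinitely many mutually independent copies of the survival event, a Schnorr test relative to $0'$ with computable measures, and extraction of the infinite subset from a surviving path. The only real difference is cosmetic: you produce the independent copies from disjoint subtrees of $C$ rooted at a $0'$-computable antichain, whereas the paper reuses the whole tree $C$ against shifted copies $X^{n}=\{\sigma: 0^{n}*1*\sigma\in X\}$ of the random set; both devices give the same independence, and your explicit pruning of $C$ to an exactly $3$-ary tree (and your inclusion of the root's factor $\tfrac12$ in the survival probability $\tfrac12(1-q)$) is, if anything, slightly more careful than the paper's presentation.
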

\begin{proof}
Let $R\subseteq\omega$ be Schnorr 2-random, and let $X\subseteq\omega^{<\omega}$ be the image of $R$ under an effective bijection $h:\omega\to\omega^{<\omega}$. In this situation we say that $X$ is a Schnorr 2-random subset of $\omega^{<\omega}$. Since $h$ induces a map $\hat h:\{R:R\subseteq\omega\}\to\{X:X\subseteq\omega^{<\omega}\}$ given by $\hat h(R)=\{h(n):n\in R\}$, that preserves subsets and infinitude, it suffices to show that there is an infinite set $Y\subseteq X$ such that for all $Z\le_{T}Y$, $Z$ is not 1-random.

By Theorem \ref{Main}, let $C$ be a 3-perfectly bushy subset of $\omega^{<\omega}$, $C\le_{T}0'$, such that for each $W\in [C]$ and each 1-random set $Z$, $Z\not\le_{T} W$. 

Recall that $\sigma\subseteq \tau$ means that $\sigma$ is a substring of $\tau$. Let\footnote{$G_{X}$ can be thought of as a Galton-Watson family tree.} 
\[
	G_{X}=\{\sigma\in\omega^{<\omega} : (\forall\tau\subseteq\sigma)( \tau\in C\cap X)\}.
\]
To connect with the Extinction Criterion \ref{extinctionCriterion}, first write $\{\sigma\in G_{X} : |\sigma|=n\}=\{\sigma^{(n)}_{0},\ldots,\sigma^{(n)}_{Z_{n}}\}$, where for each $0\le t<Z_{n}$, $\sigma^{(n)}_{t}$ precedes $\sigma^{(n)}_{t+1}$ in some fixed computable linear order (say, the lexicographical order). Then for $i\le Z_{n}$, let $L^{(n)}_{i}$ be the cardinality of $\{k: (\sigma^{(n)}_{i})* k\in C\cap X\}$. 

Note that if we consider $X$ as the value of a fair-coin random variable on the power set of $\omega^{<\omega}$, then $L^{(n)}_{i}$ is a binomial random variable with parameters $p=1/2$ and $n=3$. That is, we have a birth-death process where everyone has 3 children, each with a 50$\%$ chance of surviving and 
themselves having 3 children.

Since the branching rate of $C$ is exactly 3, we have a kind of $C$-effective compactness making the event of extinction,
\[
	\{X:[G_{X}]=\nil\}
	=\{X: (\exists n)(\forall \sigma\in\omega^{n})(\sigma\not\in G_{X})\},
\]
into a $\Sigma^0_1(C)$ class. We produce independent copies of this class by letting $X^n=\{\sigma: 0^{n}*1*\sigma\in X\}$ and
\[
	\mc E_{n}=\{X: [G_{X^{n}}]=\nil\}.
\]
Then $\mc E_{n}$ is $\Sigma^{0}_{1}(C)$, the events $\mc E_{n}$, $n\in\omega$, are mutually independent, and $\P(\mc E_{n})=\P([G_X]=\nil)$ for each $n$.
By Theorem \ref{extinctionCriterion}, $q:=\P((\exists n) Z_{n}=0)=\P([G_X]=\nil)$ is the smallest positive fixed point of $f(s)=\sum_{k\ge 0}p_{k}s^{k}=\frac{1}{8}+\frac{3}{8}s+\frac{3}{8}s^{2}+\frac{1}{8}s^{3}$. We find that the equation $f(s)=s$ has its smallest positive solution at $s=\sqrt{5}-2$. So 
\[
	\P(\cap_{k< n}\mc E_{k})=(\mc P(\mc E_{0}))^{n}=(\sqrt{5}-2)^{n},
\]
which is  computable and converges to 0 effectively. Since $X$ is Schnorr random relative to $C$, we have $X\not\in\cap_{n}\mc E_{n}$. So fix $n$ with $X\not\in\mc E_{n}$. Then $[G_{X^{n}}]\ne\nil$, so fix $W\in [G_{X^{n}}]\subseteq [C]$. That is, if $\tau$ is a prefix of $W\in\omega^{\omega}$ then $\tau\in C$ and $0^{n}*\tau\in X$. Since $W\in [C]$, for each 1-random $Z$ we have $Z\not\le_{T}W$.

Let $Y=\{0^{n}*\tau: \tau\text{ is a prefix of }W\}\subseteq X$. Then $Y$ is clearly infinite, and Turing equivalent to $W$, hence $Y$ does not compute any 1-random set.
\end{proof}

\begin{cor}\label{Law}
There is an almost sure event $\mc A$ such that if $X\in\mc A$ then $X$ has an infinite subset $Y$ such that no element of $\mc A$ is Turing reducible to $Y$.
\end{cor}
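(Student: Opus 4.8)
The plan is to let $\mc A$ be the class of all Schnorr $2$-random subsets of $\omega$ and to read the corollary off from Theorem~\ref{law}. The only point requiring care is the choice of $\mc A$: it must be almost sure; it must be contained in the Martin-L\"of random (that is, $1$-random) sets, so that the conclusion ``$Z$ is not $1$-random'' of Theorem~\ref{law} actually prevents $Y$ from computing members of $\mc A$; and it must be contained in the Schnorr $2$-random sets, so that the hypothesis of Theorem~\ref{law} is met by each member. The Schnorr $2$-random sets meet all three demands simultaneously, and recognizing this is essentially the whole proof.

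First I would verify $\P(\mc A)=1$. This is immediate, since Schnorr randomness relative to any fixed oracle --- here $0'$ --- is an almost sure property: the union of the countably many associated Schnorr null sets is null. Next, fixing $X\in\mc A$, I apply Theorem~\ref{law} with $R=X$; as $X$ is Schnorr $2$-random this yields an infinite set $Y\subseteq X$ such that every $Z\le_T Y$ fails to be $1$-random, and this single $Y$ will witness the corollary for $X$.

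It remains to check that no element of $\mc A$ is Turing reducible to $Y$, which is the one genuinely mathematical step and hence the expected obstacle. It reduces to the implication that every Schnorr $2$-random set is $1$-random --- exactly the remark, made just before Theorem~\ref{law}, that Schnorr $2$-randomness is more restrictive than $1$-randomness, which I may therefore simply invoke. Granting it, any $Z\in\mc A$ is $1$-random, so $Z\le_T Y$ would exhibit a $1$-random set computable from $Y$, contradicting the choice of $Y$; thus $Z\not\le_T Y$ for every $Z\in\mc A$, as required. If one prefers a self-contained argument for the implication, note that every Martin-L\"of test is already a Schnorr test relative to $0'$: the measure of a $\Sigma^0_1$ open set is a left-c.e.\ real and so is uniformly $0'$-computable, which is the only ingredient a Martin-L\"of test lacks to be a Schnorr $0'$-test. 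Hence passing all Schnorr $0'$-tests forces passing all Martin-L\"of tests. Alternatively one can avoid the implication altogether by taking $\mc A$ to be the intersection of the Schnorr $2$-random sets with the $1$-random sets; this is still almost sure and makes both the hypothesis and the conclusion directly available.
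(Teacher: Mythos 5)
Your proposal is correct and is essentially the paper's own proof: the paper likewise sets $\mc A$ to be the class of Schnorr $2$-random sets and applies Theorem~\ref{law}, leaving implicit the two facts you spell out (that this class has measure one, and that Schnorr $2$-randomness implies $1$-randomness, which the paper signals in the remark that Schnorr $2$-randomness is ``more restrictive''). Your explicit verification that every Martin-L\"of test is a Schnorr test relative to $0'$ is a correct justification of that implication, so your write-up is simply a more detailed rendering of the same argument.
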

\begin{proof}
Let $	\mc A=\{X\mid X\text{ is Schnorr 2-random}\}$ and apply Theorem \ref{law}.
\end{proof}

It is of interest for the study of Ramsey's theorem in Reverse Mathematics to know how far the Law of Weak Subsets can be effectivized. This subject is discussed in an earlier paper \cite{K:MRL} and studied in detail by Dzhafarov \cite{Dzhafarov}. 

\begin{que}
Does Corollary \ref{Law} hold with $\mc A=\{X \mid X\text{ is 1-random}\}$? That is, does every 1-random set $X$ have an infinite subset $Y\subseteq X$ such that $Y$ does not compute any 1-random set?
\end{que}

\begin{bibdiv}
\begin{biblist}
\bib{AKLS}{article}{
	author={Ambos-Spies, Klaus},	
	author={Kjos-Hanssen, Bj{\o}rn},
	author={Lempp, Steffen},
	author={Slaman, Theodore A.},
	title={Comparing DNR and WWKL},
	journal={J. Symbolic Logic},
	volume={69},
	date={2004},
	number={4},
	pages={1089--1104},
	issn={0022-4812},
	review={\MR{2135656 (2006c:03061)}},
}
	
\bib{Cournot}{book}{
	author={Cournot, A.},
	year={1847},
	title={De l'Origine et des Limites de la Correspondance entre l'Alg\`ebre et la G\'eom\'etrie}, 
	publisher={Hachette}, 
	place={Paris},
}
		
\bib{Dzhafarov}{article}{
   author={Dzhafarov, Damir D.},
   title={Stable Ramsey's theorem and measure},
   journal={Notre Dame J. Form. Log.},
   volume={52},
   date={2011},
   number={1},
   pages={95--112},
   issn={0029-4527},
   review={\MR{2747165}},
   doi={10.1215/00294527-2010-039},
}

\bib{GM}{article}{
   author={Greenberg, Noam},
   author={Miller, Joseph S.},
   title={Lowness for Kurtz randomness},
   journal={J. Symbolic Logic},
   volume={74},
   date={2009},
   number={2},
   pages={665--678},
   issn={0022-4812},
   review={\MR{2518817 (2010b:03050)}},
}

\bib{HS}{book}{
   author={Heyde, C. C.},
   author={Seneta, E.},
   title={I. J. Bienaym\'e. Statistical theory anticipated},
   note={Studies in the History of Mathematics and Physical Sciences, No.
   3},
   publisher={Springer-Verlag},
   place={New York},
   date={1977},
   pages={xiv+172 pp. (1 plate)},
   isbn={0-387-90261-9},
   review={\MR{0462888 (57 \#2855)}},
}

\bib{Jockusch:89}{article}{
   author={Jockusch, Carl G., Jr.},
   title={Degrees of functions with no fixed points},
   conference={
      title={Logic, methodology and philosophy of science, VIII},
      address={Moscow},
      date={1987},
   },
   book={
      series={Stud. Logic Found. Math.},
      volume={126},
      publisher={North-Holland},
      place={Amsterdam},
   },
   date={1989},
   pages={191--201},
   review={\MR{1034562 (91c:03036)}},
   doi={10.1016/S0049-237X(08)70045-4},
}

\bib{K:MRL}{article}{
   author={Kjos-Hanssen, Bj{\o}rn},
   title={Infinite subsets of random sets of integers},
   journal={Math. Res. Lett.},
   volume={16},
   date={2009},
   number={1},
   pages={103--110},
   issn={1073-2780},
   review={\MR{2480564 (2010b:03051)}},
}

\bib{Kucera:84}{article}{
   author={Ku{\v{c}}era, Anton{\'{\i}}n},
   title={Measure, $\Pi^0_1$-classes and complete extensions of ${\rm
   PA}$},
   conference={
      title={Recursion theory week},
      address={Oberwolfach},
      date={1984},
   },
   book={
      series={Lecture Notes in Math.},
      volume={1141},
      publisher={Springer},
      place={Berlin},
   },
   date={1985},
   pages={245--259},
   review={\MR{820784 (87e:03102)}},
   doi={10.1007/BFb0076224},
}

\bib{KL}{article}{
	author={Kumabe, Masahiro},
	author={Lewis, Andrew E. M.},   
	title={A fixed-point-free minimal degree},
	journal={J. Lond. Math. Soc. (2)},
	volume={80},
	date={2009},
	number={3},
	pages={785--797},
	issn={0024-6107},
	review={\MR{2559129}},
	doi={10.1112/jlms/jdp049},
}

\bib{LV}{book}{
   author={Li, Ming},
   author={Vit{\'a}nyi, Paul},
   title={An introduction to Kolmogorov complexity and its applications},
   series={Texts in Computer Science},
   edition={3},
   publisher={Springer},
   place={New York},
   date={2008},
   pages={xxiv+790},
   isbn={978-0-387-33998-6},
   review={\MR{2494387 (2010c:68058)}},
   doi={10.1007/978-0-387-49820-1},
}

\bib{LP}{book}{
	author={Lyons, Russell},
	author={Peres, Yuval},
	title={Probability on Trees and Networks},
	publisher={Cambridge University Press},
	status={in preparation. Current version available at \url{http://mypage.iu.edu/~rdlyons/}{}.},
	date={2011-03-18},
}

\bib{NiesBook}{book}{
   author={Nies, Andr{\'e}},
   title={Computability and randomness},
   series={Oxford Logic Guides},
   volume={51},
   publisher={Oxford University Press},
   place={Oxford},
   date={2009},
   pages={xvi+433},
   isbn={978-0-19-923076-1},
   review={\MR{2548883}},
   doi={10.1093/acprof:oso/9780199230761.001.0001},
}

\bib{Schnorr}{book}{
	author={Schnorr, Claus-Peter},
	title={Zuf\"alligkeit und Wahrscheinlichkeit. Eine algorithmische
	Begr\"undung der Wahrscheinlichkeitstheorie},
	series={Lecture Notes in Mathematics, Vol. 218},
	publisher={Springer-Verlag},
	place={Berlin},
	date={1971},
	pages={iv+212},
	review={\MR{0414225 (54 \#2328)}},
}
	\end{biblist}
\end{bibdiv}
\end{document}